\newtheorem{corollary}{Corollary}
\newtheorem{proposition}{Proposition}
\newcommand{\dbar}{\overline{\partial}}
\newtheorem{theorem}{Theorem}
\newtheorem*{notation*}{Notation}
{\theoremstyle{definition}

\DeclareMathOperator{\im}{Im}
\DeclareMathOperator{\dom}{dom}

\DeclareMathOperator{\supp}{supp}

\numberwithin{equation}{section}
{\theoremstyle{definition}



\newcommand{\df}{\text{Diederich-Forn{\ae}ss }}

\newcommand{\C}{\mathbb{C}}
\newcommand{\R}{\mathbb{R}}
\def\F{{\mathcal{F}}}
\renewcommand{\o}{\omega}
\renewcommand{\O}{\Omega}

\newcommand{\abs}[1]{\left|  #1 \right|}
\newcommand{\norm}[1]{\left\Vert#1\right\Vert}
\newcommand{\isum}{\sideset{}{'}\sum}

\newcommand{\dbarstar}{\bar{\partial}^{\star}}
\newcommand{\dbarb}{\bar{\partial}_b}
\newcommand{\dbarbstar}{\bar{\partial}_b^{\star}}

\newcommand{\dbarbstarweighted}{\bar{\partial}_{b,\varphi}^{\star}}
\newcommand{\dbarbstarwh}{\bar{\partial}_{b,-h_\eta}^{\star}}

\newcommand{\normw}[1]{\Vert#1\Vert_\varphi}
\newcommand{\normwh}[1]{\Vert#1\Vert_{-h_\eta}}
\newcommand{\weight}{e^{-\varphi}}

\newcommand{\bigo}[1]{\mathcal{O}(#1)}

\newcommand{\chip}{\chi^{+,\eta}}
\newcommand{\chim}{\chi^{-,\eta}}
\newcommand{\chiz}{\chi^{0,\eta}}

\newcommand{\chipt}{\tilde{\chi}^{+,\eta}}

\newcommand{\pplus}{\mathcal{P}^{+,\eta}}
\newcommand{\pminus}{\mathcal{P}^{-,\eta}}
\newcommand{\pzero}{\mathcal{P}^{0,\eta}}


\begin{document}

\title[regularity of the complex Green operator]{Diederich--Forn\ae ss index and global regularity of the complex Green operator: domains with comparable Levi eigenvalues}
\author{Tanuj Gupta}
\author{Emil J. Straube}
\subjclass[2010]{}
\thanks{Work supported in part by NSF grant DMS--2247175. The bulk of this material comes from the first author's Ph.~D. dissertation at Texas A\&M University (\cite{Gupta25}), supervised by the second author.}

\address{Department of Mathematics, Texas A\&M University, College Station, Texas, USA}
\email{tanujgupta17@tamu.edu}
\email{e-straube@tamu.edu}
\date{December 2025}

\begin{abstract}
Let $\Omega\subset\C^n$, with $n \geq 3$, be a smooth bounded pseudoconvex domain satisfying the symmetric eigenvalue comparability condition $D(q_0)$ for some $1\le q_0\le n-2$. We show that if the \df index of $\O$ is one, then the complex Green operator $G_q$, associated with $\O$, is globally regular for $q$ in the range $\min\{q_0,\, n - 1 - q_0\} \leq q \leq \max\{q_0,\, n - 1 - q_0\}$.
\end{abstract}

\maketitle
\section{Introduction}\label{intro} 
The study of regularity estimates for the $\overline{\partial}$--Neumann and complex Green operators in terms of (weak) plurisubharmonicity properties of a defining function goes back to \cite{Boas-Straube-1991, BoasStraube91b}, where Boas and the second author established Sobolev estimates for these operators when the domain admits a defining function that is plurisubharmonic at points of the boundary. Kohn (\cite{Kohn99}) then initiated a quantitative study of estimates in terms of the Diederich--Forn\ae ss index. The latter can be viewed as a measure of how close a domain is to having a defining function that is plurisubharmonic at the boundary (albeit with caution: index one does not imply the existence of such a defining function). These ideas were developed further by Harrington (\cite{Harrington11, Harrington19, Harrington22}), Pinton--Zampieri (\cite{PinZamp14}), and Liu (\cite{Liu22}). Most recently, Liu and the second author (\cite{Liu-Straube-2025}) showed that if $1 \leq q_0 \leq n-1$ and the $q_0$-sums of the eigenvalues of the Levi form of a smooth bounded pseudoconvex domain $\Omega$ are comparable, then Diederich--Forn\ae ss index one implies global regularity of the $\bar{\partial}$-Neumann operators $N_q$ and the Bergman projections $P_{q-1}$ for $q_0 \leq q \leq n$. All these results point towards the conjecture that index one should imply regularity. We refer the reader to Section 1 of \cite{Straube25b} for further information on these developments.

\smallskip

Here we prove an analogue of \cite{Liu-Straube-2025} for the complex Green operators $G_q$ and the Szeg{\"o} projections $S_{q-1}$. Our main result is the following.
\begin{theorem}\label{main-result}
Let $\O\subset\C^n$, with $n\geq 3$, be a smooth bounded pseudoconvex domain. Let $q_0$ be an integer such that $1\le q_0 \le n-2$ and assume that $\Omega$ satisfies condition $D(q_0)$. If $DF(\Omega) =1$, then for each $s\ge 0$, there exists a constant $C_s$ such that all for $u\in L^2_{(0,q)}(b\O)$ and $\min\{ q_0,n-1-q_0 \} \le q \le \max\{ q_0,n-1-q_0 \}$:
\begin{enumerate}
    \item $\norm{u}_s \le C_s ( \norm{\dbarb u}_s + \norm{\dbarbstar u}_s )$,
    \item $\norm{S_q u}_s \le C_s \norm{u}_s$\;, and
    \item $\norm{G_q u}_s \le C_s \norm{u}_s$\;.
\end{enumerate}
(1) and (2) also hold for $q=\min\{ q_0,n-1-q_0 \}-1$ and $q=\max\{ q_0,n-1-q_0 \}+1$ if in (1), $u\perp\ker(\overline{\partial}_{b})$ when $q=\min\{ q_0,n-1-q_0 \}-1$, and $u\perp\ker(\overline{\partial}_{b}^{*})$ when $q=\max\{ q_0,n-1-q_0 \}+1$, and if in (2), $S_{n-1}$ is the projection onto $\im(\overline{\partial}_{b})=\ker(\overline{\partial}_{b}^{*})^{\perp}$.
\end{theorem}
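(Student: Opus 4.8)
The plan is to prove the a priori estimate (1) first and to obtain (2) and (3) from it by the standard arguments. On a smooth bounded pseudoconvex domain $\dbarb$ has closed range on $b\O$ in degrees $1\le q\le n-2$, so $G_q$ and $S_q$ are well defined there; granting (1) at the consecutive degrees $q-1$, $q$, $q+1$ (all of which lie in the range where (1) is asserted), a simultaneous elliptic regularization of $\Box_b=\dbarb\dbarbstar+\dbarbstar\dbarb$ and of its kernel --- exactly as in the $\dbar$-Neumann analogue \cite{Liu-Straube-2025} and in the standard theory of the complex Green operator --- yields global regularity of $G_q$ and of $S_q$. At the endpoints $q=\min\{q_0,n-1-q_0\}-1$ and $q=\max\{q_0,n-1-q_0\}+1$ estimate (1) holds only on $\ker(\dbarb)^{\perp}$, resp.\ $\ker(\dbarbstar)^{\perp}$ (otherwise harmonic forms would be counterexamples), which is the source of the orthogonality hypotheses there; at top degree $S_{n-1}$ is taken to be the projection onto $\im(\dbarb)$. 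Thus the whole content is the weighted microlocal estimate (1).

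For (1), first note the symmetry of the hypothesis: if $0\le\lambda_1\le\cdots\le\lambda_{n-1}$ are the Levi eigenvalues at a point of $b\O$, $T$ their sum, $\sigma^{\min}=\lambda_1+\cdots+\lambda_{q_0}$, $\sigma^{\max}=\lambda_{n-q_0}+\cdots+\lambda_{n-1}$ and $M=T-\sigma^{\min}-\sigma^{\max}\ge 0$, then $D(q_0)$, namely $\sigma^{\min}\ge c\,\sigma^{\max}$ with a constant $c$ that may be taken $\le 1$, gives $T-\sigma^{\max}=\sigma^{\min}+M\ge c(\sigma^{\max}+M)=c(T-\sigma^{\min})$, which is $D(n-1-q_0)$. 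So we may assume $q_0\le n-1-q_0$ and work in the range $q_0\le q\le n-1-q_0$. Fix such a $q$ and a smooth $(0,q)$-form $u$ on $b\O$, and decompose $u=\uplus+\uminus+\uzero$ via tangential pseudodifferential operators $\pplus,\pminus,\pzero$ adapted to the contact form $\theta$ of $b\O$: $\pplus$ and $\pminus$ microlocalize to conic neighborhoods of the rays $\mathbb{R}_{>0}\theta$ and $\mathbb{R}_{<0}\theta$, and $\pzero$ to their complement, where $\Box_b$ is elliptic. Hence $\|\uzero\|_s\lesssim\|\dbarb u\|_s+\|\dbarbstar u\|_s+\|u\|_{s-1}$, up to commutator terms (involving $\Lambda^s$ and the cutoffs $\chip,\chim,\chiz$) of the same form; the difficulty lies in $\uplus$ and $\uminus$.

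These are treated by weighted basic identities for $\dbarb$ and $\dbarbstar$ with weight $-h_\eta$. Since $DF(\O)=1$, for every $\eta>0$ there is a defining function $\rho$ of $\O$ with $-(-\rho)^{\eta}$ plurisubharmonic near $b\O$, and from such a $\rho$ one constructs a bounded weight $h_\eta$ on $b\O$ whose complex Hessian along $T^{1,0}(b\O)$ dominates, on the relevant cone, a term $\gtrsim\eta\,(\text{Levi form})\,\abs{\partial\rho}^2$ up to controllable errors, with gradient self-bounded by $\bigoeta{1}$. Now condition $D(q_0)$ enters decisively: for $q_0\le q\le n-1-q_0$, the sum of any $q$ of the $\lambda_i$ (which governs the $\uplus$-estimate through $\dbarb u$) and the sum of any $n-1-q$ of them (which governs the $\uminus$-estimate through $\dbarbstar u$, the two being interchanged by the $q\leftrightarrow n-1-q$ duality of the boundary complex) are each $\ge c'\,T\,\abs{u}^2$, because any $q$-sum with $q\ge q_0$ dominates $\sigma^{\min}\ge c\,\sigma^{\max}\ge c\,\lambda_{n-1}\ge\tfrac{c}{n-1}\,T$, and similarly for any $(n-1-q)$-sum since $n-1-q\ge q_0$. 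Substituting these lower bounds into the weighted Kohn--Morrey--H\"ormander inequalities, so that the Levi term and the complex Hessian of $-h_\eta$ combine to a favorable positive contribution, gives $\|\uplus\|_s+\|\uminus\|_s\le\varepsilon\|u\|_s+\bigoeta{\|u\|_{s-1}}+C\bigl(\|\dbarb u\|_s+\|\dbarbstar u\|_s\bigr)$; the order-$s$ commutator terms enter here with a coefficient of order $\eta s$ times the self-bounded-gradient constant of the construction, so $\varepsilon$ is made small by taking $\eta$ small relative to $1/s$ --- and $DF(\O)=1$ is precisely what makes arbitrarily small $\eta$, hence every Sobolev level $s$, available. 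Summing the three pieces, absorbing $\varepsilon\|u\|_s$, and inducting on $s$ proves (1).

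I expect the main obstacle to be the construction and the bookkeeping of the weight $h_\eta$ in the intrinsic $\dbarb$-setting on $b\O$, together with the accompanying microlocal commutator estimates: one must verify that every order-$s$ term produced by commuting $\Lambda^s$ and the cutoffs past $\dbarb$, $\dbarbstarwh$ and $\pplus$ (resp.\ $\pminus$) is dominated by the weighted positivity with a constant proportional to $\eta$, so that small $\eta$ closes the induction --- all while keeping the weight well behaved despite the non-smoothness of $(-\rho)^{\eta}$ at $b\O$, and keeping the comparison $D(q_0)$ uniform under changes of orthonormal frame. In short, the technical heart is transplanting the weight construction and estimates of \cite{Liu-Straube-2025} from the $\dbar$-Neumann problem on $\O$ to the boundary complex on $b\O$; by comparison, the microlocal elliptic estimate for $\uzero$ and the functional-analytic passage from (1) to (2) and (3), endpoint cases included, are routine.
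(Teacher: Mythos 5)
Your outline correctly identifies the big structural moves — microlocalize with $\pplus,\pminus,\pzero$, use a weighted Kohn–Morrey–H\"ormander estimate, exploit the symmetry $D(q_0)\Rightarrow D(n-1-q_0)$, and derive (2) and (3) from (1) by Harrington–Peloso–Raich/Straube — and those are all present in the paper. But the heart of your argument inverts the role of $\eta$, and this is not a cosmetic slip: it makes the hypothesis $DF(\Omega)=1$ vacuous. You write that $DF(\Omega)=1$ ``is precisely what makes arbitrarily small $\eta$ available'' and that the commutator errors carry a coefficient ``of order $\eta s$,'' so that you want $\eta$ small relative to $1/s$. Arbitrarily small $\eta$ is available for \emph{every} smooth bounded pseudoconvex domain by the Diederich–Forn\ae ss theorem; what $DF(\Omega)=1$ buys is exactly the opposite, namely that $\eta$ may be taken as close to $1$ as desired. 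If your mechanism were correct, the conclusion would follow from $DF(\Omega)>0$ alone, with the index hypothesis playing no role. In the paper the small parameter that gets absorbed is $(1-\eta)$, not $\eta$: see \eqref{alpha-index} (where the constant is $2(1-\eta)/\eta$), the estimate \eqref{estimate-main-proposition-euclidean} in Proposition \ref{main-proposition}, and the absorption steps following \eqref{5.13} and \eqref{wt-eq4}, all of which require $\eta$ close to $1$.

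The reason you land in the wrong regime is that the actual mechanism is missing from your sketch. The paper does not build a weight on $b\Omega$ whose complex Hessian dominates $\eta$ times the Levi form; it uses the Liu–Yum characterization \eqref{alpha-index}--\eqref{index2} of the DF index in terms of the D'Angelo form $\alpha_{\rho_\eta}$, translates this, via the Kohn–Morrey–H\"ormander identity of Proposition \ref{proposition-kohn-morrey-hormander} (with weight $-h_\eta$, where $\rho_\eta=e^{h_\eta}\rho$), into the $L^2$-estimate $\sum'_J\int_{b\Omega}|\alpha_{\rho_\eta}(L_u^J)|^2\lesssim(1-\eta)(\|\dbarb u\|^2+\|\dbarbstar u\|^2)+C_\eta\|u\|_{-1}^2$, and then observes that the only order-$k$ terms in $[\dbarb,T_\eta^k]$ and $[\dbarbstar,T_\eta^k]$ that cannot be handled by the maximal estimates \eqref{maxest} are exactly those carrying a factor $\alpha_{\rho_\eta}(L_j)$ or $\alpha_{\rho_\eta}(\overline{L_j})$ (formulas \eqref{commutator-formula-dbarb}, \eqref{commutator-formula-dbarbstar}). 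That is where $(1-\eta)$ enters and where absorption happens. Your proposal also does not address two technical but essential points that the paper spends real effort on: (a) the absorption arguments need a priori finiteness of $\|u\|_k$, which the paper secures by working on the level sets $b\Omega_{\eta,\delta}$ with constants uniform in $\delta$ and then passing to the limit as in Boas–Straube; and (b) the standard microlocal cutoffs must be modified to $\chi^{\pm,\eta}$ supported where $|\tau|\gtrsim\tilde M_\eta$ (see \eqref{chi-plus-definition}) precisely so that the $\eta$-dependent lower-order terms coming from $\tilde M_\eta$ can be controlled with constants independent of $\eta$ up to a harmless $C_\eta\|u\|_{-1}^2$. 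Neither of these appears in your outline, and the first is what makes ``absorbing $\varepsilon\|u\|_s$'' legitimate.
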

\noindent $DF(\Omega)$ refers to the Diederich--Forn\ae ss index of $\Omega$, and $D(q_{0})$ is a comparability condition bearing on sums of $q_{0}$ eigenvalues of the Levi form introduced by Derridj (\cite{Derridj-1991}) (see also Koenig (\cite{Koenig-2002}): precise definitions for both, as well as for the related condition $\widetilde{D}(q)$, are in Section \ref{def}. Observe that $\min\{ q_0,n-1-q_0 \}$ or $\max\{ q_0,n-1-q_0 \}$ equals $q_{0}$, depending on whether $q_{0}\leq (n-1)/2$ or $q_{0}\geq (n-1)/2$, respectively. In particular, the range of  allowable $q$'s is symmetric about $(n-1)/2$, in line with the expectation that estimates on the boundary should hold at symmetric form levels simultaneously (see the discussion around \eqref{5.33}). The estimates in Theorem \ref{main-result} are not independent: (3) holds for $(0,q)$--forms if and only if (2) holds at levels $(q-1)$, $q$, and $(q+1)$ (\cite{Harrington-Peloso-Raich-2015}), and (1) holds if and only if (2) holds (appropriately formulated when $q=0$ or $q=(n-1)$, \cite{Straube25}). 

\smallskip

We single out for emphasis the most important case in Theorem \ref{main-result}, that of comparable eigenvalues. Let $\lambda_1(p),\dots,\lambda_{n-1}(p)$ denote the eigenvalues of the Levi form of $b\Omega$ at $p$, listed with multiplicity. The eigenvalues are said to be comparable if there is a constant $c$ such that $\lambda_{j}(p)\leq c\lambda_{k}(p)$, for all $p\in b\Omega$ and $1\leq j,k\leq (n-1)$.
\begin{corollary}\label{comparable}
 Let $\O\subset\C^n$, with $n\geq 3$, be a smooth bounded pseudoconvex domain whose Levi eigenvalues are comparable. Then, if $DF(\Omega)=1$, the conclusions of Theorem \ref{main-result} hold for $u\in L^{2}_{(0,q)}(b\Omega)$, $0\leq q\leq (n-1)$.
\end{corollary}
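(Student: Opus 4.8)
The plan is to obtain Corollary \ref{comparable} as the special case $q_0=1$ of Theorem \ref{main-result}. The first step is to recognize that the comparability of the Levi eigenvalues is exactly condition $D(1)$: a ``$q_0$--sum'' of eigenvalues at a point $p$ is, for $q_0=1$, just a single eigenvalue $\lambda_j(p)$, so the requirement that all $1$--sums at $p$ be mutually comparable (uniformly in $p$) is precisely the requirement $\lambda_j(p)\le c\,\lambda_k(p)$ for all $p\in b\Omega$ and all $1\le j,k\le n-1$. In fact comparability implies $D(q_0)$ for every $q_0$ with $1\le q_0\le n-2$, since any sum of $q_0$ eigenvalues at $p$ lies between $q_0\lambda_{\min}(p)$ and $q_0\lambda_{\max}(p)\le q_0 c\,\lambda_{\min}(p)$; but only $D(1)$ will be needed, because $q_0=1$ already produces the widest admissible range of form degrees.

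Next I would unwind what Theorem \ref{main-result} says for $q_0=1$. Because $n\ge 3$, the value $q_0=1$ satisfies $1\le q_0\le n-2$, and $\min\{q_0,n-1-q_0\}=1$ while $\max\{q_0,n-1-q_0\}=n-2$. The theorem then delivers all three estimates (1)--(3) for $1\le q\le n-2$, together with estimates (1) and (2) at the two flanking degrees $q=0$ and $q=n-1$, subject to the side conditions recorded in the theorem: in (1), $u\perp\ker\dbarb$ when $q=0$ and $u\perp\ker\dbarbstar$ when $q=n-1$; in (2), with $S_{n-1}$ read as the projection onto $\im\dbarb=(\ker\dbarbstar)^{\perp}$. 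Since $\{0\}\cup\{1,\dots,n-2\}\cup\{n-1\}=\{0,1,\dots,n-1\}$, this already establishes (1) and (2) for every form degree $0\le q\le n-1$.

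The remaining step is to extend estimate (3), the regularity of $G_q$, to the two extreme degrees $q=0$ and $q=n-1$, which Theorem \ref{main-result} does not cover. For this I would invoke the equivalence from \cite{Harrington-Peloso-Raich-2015} recalled in the introduction: $G_q$ is globally regular if and only if $S_{q-1}$, $S_q$, and $S_{q+1}$ are, where the conditions at the non-existent levels $q=-1$ and $q=n$ drop out. For $q=0$ this reduces to regularity of $S_0$ (obtained above as the $q=0$ endpoint) and of $S_1$ (which lies in the range $1\le q\le n-2$ since $n\ge 3$); for $q=n-1$ it reduces to regularity of $S_{n-1}$ (the $q=n-1$ endpoint) and of $S_{n-2}$ (again in that range). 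This gives (3) for all $0\le q\le n-1$ and completes the argument.

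I do not anticipate a substantive obstacle here: the corollary is a bookkeeping specialization of Theorem \ref{main-result}, and the only point that requires care is the behavior at the two extreme form degrees. There estimate (1) cannot be unconditional --- $\dbarbstar$ vanishes on functions and $\dbarb$ vanishes on top-degree forms, so a side condition is forced --- and $S_0$, $S_{n-1}$ must be understood as the projections onto $\ker\dbarb$ and onto $(\ker\dbarbstar)^{\perp}$ respectively, exactly the conventions already built into the statement of Theorem \ref{main-result}.
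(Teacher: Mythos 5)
Your first two paragraphs reproduce exactly the paper's (one-sentence) argument: comparable eigenvalues imply $D(1)$, so one may take $q_0=1$ in Theorem~\ref{main-result}, and since $n\ge 3$ this gives $\min\{q_0,n-1-q_0\}=1$ and $\max\{q_0,n-1-q_0\}=n-2$, hence (1)--(3) for $1\le q\le n-2$ together with (1) and (2) at the flanking degrees $q=0$ and $q=n-1$ under the stated side conditions. That union of form degrees is $\{0,1,\dots,n-1\}$, which is all the corollary is asserting.

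Your third paragraph over-reaches. In this paper $G_q$ is defined only for $1\le q\le n-2$, as the bounded inverse of $\Box_{b,q}$; at $q=0$ and $q=n-1$ the boundary Laplacian has a nontrivial kernel, so there is no $G_0$ or $G_{n-1}$ in the framework Theorem~\ref{main-result} uses, and the corollary does not claim estimate (3) at those degrees. The phrase ``the conclusions of Theorem~\ref{main-result} hold for $0\le q\le n-1$'' refers to the conclusions together with their built-in form-degree restrictions, so (3) is still only claimed for $1\le q\le n-2$; trying to extend it to $q=0,n-1$ via the Harrington--Peloso--Raich equivalence would first require fixing a definition of a relative (generalized) inverse at those degrees and then checking that the equivalence persists in that modified form, none of which the corollary requires. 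Dropping the third paragraph would make your proof both shorter and identical to the paper's.
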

Corollary \ref{comparable} follows from Theorem \ref{main-result} because the comparable eigenvalues condition implies $D(1)$, so $q_{0}=1$; see the discussion of $D(q)$ in section \ref{def}.

\smallskip

While $D(q)$ is indispensable in our current proof of Theorem \ref{main-result}, whether it is really needed is doubtful; the conjecture mentioned above that index one should imply regularity is plausible also for the complex Green operator. For example, on (smooth bounded) convex domains, the estimates (1)--(3) in Theorem \ref{main-result} always hold (by \cite{BoasStraube91b}), while $D(q)$ is rather restrictive. Indeed, a convex domain that satisfies $D(q)$ also satisfies $\widetilde{D}(q)$, hence satisfies maximal estimates on the interior (\cite{BenMoussa00, Derridj-1978}). As a result, its boundary must not contain analytic varieties of dimension $q$, except possibly ones that are contained in an $(n-1)$--dimensional subvariety (\cite{CSS20}, Corollary 1). 

\smallskip

The paper is organized as follows. Section \ref{def} gives the needed definitions and some preliminary background material. In section \ref{ICDA}, we discuss D'Angelo forms and their connections to the Diederich--Forn\ae ss index and to commutators with $\overline{\partial}_{b}$ and $\overline{\partial}_{b}^{*}$. We also recall an important estimate from \cite{Liu-Straube-2025}, which we then use in section \ref{main-estimate} to establish Proposition \ref{main-proposition}, for our purposes the principal consequence of having DF--index one. This $L^{2}$--type estimate is what ultimately lets us control commutators with $\overline{\partial}_{b}$ and $\overline{\partial}_{b}^{*}$. We use it in section \ref{proof} in combination with ideas from \cite{BoasStraube91b, Liu-Straube-2025, Straube25 } to prove Theorem \ref{main-result}. Section \ref{appendix} states a Kohn--Morrey--H\"{o}rmander type estimate for the boundary, a special case of an estimate in \cite{Harrington-Raich-2011}, simplified for what we need.

\section{Definitions and preliminaries}\label{def}
This section serves to recall some basic definitions and set up notation. Let $\Omega$ be a smooth bounded pseudoconvex domain in $\mathbb{C}^{n}$, with the CR--structure on $b\Omega$ induced by the ambient space. That is, $T^{1,0}(b\Omega)$ is the bundle of vectors $\sum_{j=1}^{n}a_{j}(z)\partial/\partial z_{j}$ with $\sum_{j=1}^{n}a_{j}\partial\rho/\partial z_{j} =0$ for some defining function $\rho$ (hence for any defining function). For $0\leq q\leq (n-1)$, denote by $\overline{\partial}_{b}: L^{2}_{(0,q)}(b\Omega)\rightarrow L^{2}_{(0,q+1)}(b\Omega)$ the usual (extrinsically defined) tangential Cauchy--Riemann operator, and by $\overline{\partial}_{b}^{*}$ its Hilbert space adjoint. These operators have closed range, and there is an associated Hodge theory (\cite{Shaw85b, Boas-Shaw-1986, Kohn86, ChenShaw01, Biard-Straube-2017}):
\begin{equation}\label{Hodge}
 L^{2}_{(0,q)}(b\Omega) = \underbrace{\ker(\overline{\partial}_{b})}_{\im(\overline{\partial}_{b})\;q=(n-1)} \oplus \underbrace{\ker(\overline{\partial}_{b}^{*})}_{\im(\overline{\partial}_{b}^{*})\;q=0}\;;\;1\leq q\leq (n-2)\;.
\end{equation}
The underbraces indicate the modifications for the respective values of $q$. In particular,
\begin{equation}\label{basic}
 \|u\|^{2} \leq C(\|\overline{\partial}_{b}u\|^{2}+\|\overline{\partial}_{b}^{*}u\|^{2})\;;\;u \in \dom(\overline{\partial}_{b})\cap\dom(\overline{\partial}_{b}^{*})\;,
\end{equation}
with $u\in \ker(\overline{\partial}_{b})^{\perp}$ when $q=0$, and $u \in \ker(\overline{\partial}_{b}^{*})^{\perp}$ when $q=(n-1)$.

For $1\leq q\leq (n-2)$, the operator $\Box_{b,q}:=\overline{\partial}_{b}\overline{\partial}_{b}^{*} + \overline{\partial}_{b}^{*}\overline{\partial}_{b}$ is selfadjoint, injective, and onto, hence has a bounded inverse $G_{q}$. This is the complex Green operator. We then have  for $u\in L^{2}_{(0,q)}(b\Omega)$, $1\leq q\leq (n-2)$:
\begin{equation}\label{BoxG}
 u = \overline{\partial}_{b}\overline{\partial}_{b}^{*}G_{q}u + \overline{\partial}_{b}^{*}\overline{\partial}G_{q}u\;.
\end{equation}
If  $u$ is $\overline{\partial}_{b}$--closed, \eqref{BoxG} implies that $\overline{\partial}_{b}^{*}\overline{\partial}G_{q}u$ is also $\overline{\partial}_{b}$--closed; since it is also orthogonal to $\ker(\overline{\partial}_{b})$, it must vanish, and $u=\overline{\partial}_{b}(\overline{\partial}_{b}^{*}G_{q}u)$. That is, $\overline{\partial}_{b}^{*}G_{q}$ is a solution operator for $\overline{\partial}_{b}$. It gives the unique solution of minimal $L^{2}$--norm (also referred to as the canonical or Kohn solution). The $G_{q}$ commute with $\overline{\partial}_{b}$ and $\overline{\partial}_{b}^{*}$ (with appropriate form levels). The Szeg\"{o} projection $S_{q}$ is the orthogonal projection from $L^{2}_{(0,q)}(b\Omega)$ onto $\ker(\overline{\partial}_{b})$ when $0\leq q\leq (n-2)$, and onto $\im(\overline{\partial}_{b})=\ker(\overline{\partial}_{b}^{*})^{\perp}$ when $q=(n-1)$. We then have $S_{q}=\overline{\partial}_{b}\overline{\partial}_{b}^{*}G_{q}$ for $1\leq q\leq (n-2)$, and $S_{q} = I - \overline{\partial}_{b}^{*}G_{q+1}\overline{\partial}_{b}\;;\;0\leq q\leq (n-3)$, where $I$ is the identity.

\smallskip

Our proof of Theorem \ref{main-result} uses maximal estimates for the boundary complex, i.e.,
\begin{equation}\label{maxest}
    \norm{Yu}_{k}^2 \le C\left( \norm{\dbarb u}^2_{k} + \norm{\dbarbstar u}^2_{k} \ + \norm{u}^2_{k}\right)\;,
\end{equation}
where $Y$ is a complex tangential vector field. This estimate is stronger than the standard estimate available for complex tangential derivatives (\cite{BoasStraube91b}, Lemma 1):
\begin{equation}\label{comptang}
    \norm{Yu}_{k}^2 \le C\left( \norm{\dbarb u}^2_{k} + \norm{\dbarbstar u}^2_{k}  +\norm{u}_{k}\norm{u}_{k+1}\right)\;.
\end{equation}
The necessary and sufficient condition for the maximal estimates \eqref{maxest} to hold at the level of $(0,q)$--forms ($1\leq q\leq (n-2)$) is condition $D(q)$. For $1\le q\le n-1$, let $\sigma_q(p)$ denote any of the ${{n-1}\choose{q}}$ sums $q$ the eigenvalues and let $\tau(p) = \lambda_1(p) + \cdots + \lambda_{n-1}(p)$ denote the trace of the Levi form. Following \cite{Koenig-2002}, we say that condition $D(q)$ holds (for $1\leq q \leq n-2$), if there exists $\epsilon>0$ such that
\begin{equation}\label{condition-dq-inequality}
    \epsilon\tau(p) \le \sigma_q(p) \le (1-\epsilon)\tau(p),
\end{equation}
for all $p\in b\Omega$ and any $q$-sum $\sigma_q$ of the eigenvalues of the Levi form. It was shown in \cite{Derridj-1991} for $q=1$ and in \cite{Koenig-2002} for general $q$ that condition $D(q)$ is a sufficient condition for maximal estimates for $\dbarb\oplus\overline{\partial}_{b}^{*}$ to hold at form level $(0,q)$. According to \cite{Derridj-1991}, Remarque 2 on page 633, it is also necessary. For $1\le q_0\le n-2$, condition $D(q_0)$ implies condition $D(q)$ for $\min\{q_0,n-1-q_0\} < q < \max\{q_0,n-1-q_0\}$ (see Proposition A.3 in \cite{Koenig-2002}). For this reason, the condition is sometimes referred to as symmetric condition $D(q)$, in contrast to the nonsymmetric condition $\widetilde{D}(q)$: $\varepsilon\tau \leq \sigma_{q}$, which is the necessary and sufficient condition for maximal estimates on the interior (\cite{Derridj-1978, BenMoussa00}). Condition $D(1)$ never holds in $\mathbb{C}^{2}$, while $\widetilde{D}(1)$ always does. On the other hand, when $n\geq 3$, it is easy to see that $D(1)$ and $\widetilde{D}(1)$ are equivalent (and that $D(q)$ and $\widetilde{D}(q)$ are not when $q>1$). In particular, in the most important case, that of comparable eigenvalues, $D(1), \cdots, D(n-2)$ are all satisfied. It is pointed out in \cite{Koenig-2002} that in the pseudoconvex case, the classical condition $Y(q)$ is equivalent to $0<\sigma_{q}<\tau$, uniformly over the boundary. By a continuity argument, this latter condition implies $D(q)$, which may thus be seen as a generalization of $Y(q)$ (for pseudoconvex domains).

\smallskip

In 1977, Diederich and Forn\ae ss proved that for every smooth bounded pseudoconvex domain in $\mathbb{C}^{n}$, there is $\eta>0$ and a defining function $\rho_{\eta}$ such that $-(-\rho_{\eta})^{\eta}$ is plurisubharmonic on $\Omega$ (near the boundary; having this requirement only near the boundary does not affect the definition of the index, see \cite{Harrington22}, Lemma 2.3). It is not hard to see that the set of such $\eta$ is an interval bounded below by zero. The supremum over such $\eta$, $0<\eta\leq 1$, is called the Diederich-Forn\ae ss index of $\Omega$; we denote it by $DF(\Omega)$. Because $DF(\Omega)$ is a supremum, index one does not imply the existence of a defining function that is plurisubharmonic at the boundary. The reader is referred to Section 2.3 in \cite{Straube25b} for further discussion of the index. It is noteworthy that in all cases where the index is known to be one, it is also known that regularity holds for both the $\overline{\partial}$--Neumann operator and the complex Green operator. 

\section{DF--index, commutators, and D'Angelo forms}\label{ICDA}

The link between the index and (commutator) estimates is provided by D'Angelo forms. Their role in estimating commutators is well known. Let $\rho$ be a defining function for $\Omega$. Set $L_{n,\rho}=(1/|\partial\rho|^{2})\sum_{j=1}^{n}(\partial\rho/\partial \overline{z_{j}})\partial/\partial z_{j}$ (so that $L_{n,\rho}\rho\equiv 1$), $T_{\rho}=L_{n,\rho}-\overline{L_{n,\rho}}$, and $\sigma_{\rho}=(1/2)(\partial\rho-\overline{\partial}\rho)$ (so that $\sigma_{\rho}T_{\rho}\equiv 1$). Set
\begin{equation}\label{alpha}
 \alpha_{\rho} = -\mathcal{L}_{T_{\rho}}\sigma_{\rho}\;,
\end{equation}
where $\mathcal{L}$ denotes the Lie derivative. Then for $L\in T^{1,0}(b\Omega)$, $\alpha_{\rho}(L)=\sigma_{\rho}([T_{\rho},L])$ and $\alpha_{\rho}(\overline{L})=\sigma_{\rho}([T_{\rho},\overline{L}])$ (note that $\sigma_{\rho}(T_{\rho})\equiv 1$). The form $\alpha_{\rho}$ is called the D'Angelo form associated to $\rho$. Since both $T_{\rho}$ and $\sigma_{\rho}$ are purely imaginary, $\alpha_{\rho}$ is real. We will need to know how $\alpha_{\rho}$ changes when the defining function changes. If $e^{h}\rho$ is another defining function, a computation gives
\begin{equation}\label{changealpha}
 \alpha_{e^{h}\rho}(X) = \alpha_{\rho}(X) + dh(X)\;;\;X\in T^{1,0}(b\Omega)\oplus T^{0,1}(b\Omega)\;;
\end{equation}
see \cite{Straube-2010}, Section 5.9, \cite{Dall'Ara-Mongodi-2021}, Section 4, and \cite{Straube25b}, section 2.2 for more information on D'Angelo forms. It will be important later that various estimates can be done with constants that are uniform over the boundaries of approximating subdomains, that is over $b\Omega_{\delta}$, where $\Omega_{\delta}=\{z\in \Omega\,|\,\rho(z)+\delta<0\}$, and $0<\delta<\delta_{\rho}$. There are two things to note in this context. First, $L_{n,\rho}$, $T_{\rho}$, $\sigma_{\rho}$ and $\alpha_{\rho}$ are defined in a one sided neighborhood of $b\Omega$. Second, at points of $b\Omega_{\delta}$, they agree with the corresponding notions on $b\Omega_{\delta}$ (with $\rho+\delta$ as defining function, then $\nabla(\rho)=\nabla(\rho+\delta)$).

\smallskip

When computing $\overline{\partial}_{b}$ and $\overline{\partial}_{b}^{*}$ in a special boundary chart, the derivatives that occur are are of the form $\overline{L}$ or $L$, with $L\in T^{1,0}(b\Omega)$. So $\alpha_{\rho}(L)=\sigma_{\rho}([T_{\rho},L])$ and $\alpha_{\rho}(\overline{L})=\sigma_{\rho}([T_{\rho},\overline{L}])$ pick out the $T_{\rho}$--component (the bad direction) of the commutators with $T_{\rho}$. Denote by $L_{1},\cdots, L_{n}$ a special boundary frame, as usual, with dual frame $\omega_{1},\cdots, \omega_{n}$. If $v=\sum^{'}_{|J|=q}v_{J}\overline{\omega_{J}}$ is a $(0,q)$--form supported in this chart, then letting $T_{\rho}$ act coefficientwise in this chart and also computing $\overline{\partial}_{b}$ and $\overline{\partial}_{b}^{*}$ in this chart gives the following two commutator formulas when $k=1$:
 \begin{equation}\label{commutator-formula-dbarb}
            [\dbarb,(T_\rho)^k]v = -k\sum_{j=1}^{n-1}\isum_{J} \alpha^\rho(\overline{L}_j)((T_\rho)^kv_J)(\overline{\omega_{j}}\wedge\overline{\omega_{J}}) +  A_h(v) + B_h(v)
    \end{equation}
    and for $q\ge 1$ ,
    \begin{equation}\label{commutator-formula-dbarbstar}
            [\dbarbstar,(T_\rho)^k]v = k\sum_{j=1}^{n-1}\isum_{S} \alpha^\rho(L_j)((T_\rho)^kv_{jS})\overline{\omega_{S}} + \tilde{A}_h(v) + \tilde{B}_h(v),
    \end{equation}
where the terms $A_h(v)$ and $\tilde{A}_h(v)$ consist of terms of order $k$ with at least one of the derivatives being complex tangential, and the terms $B_h(v)$ and $\tilde{B}_h(v)$ consist of terms of order at most $(k-1)$. Induction on $k$ then gives the general case. These arguments are completely analogous to the ones that give the corresponding formulas for the commutators with $\overline{\partial}$ and $\overline{\partial}^{*}$ (\cite{Harrington-Liu-2020}, Lemmas 3.1, 3.2; \cite{Liu-Straube-2025}, Lemmas 3, 4; compare also Proposition 3.7 in \cite{LiuRaich20}).                                                                                                                                                                                                                                   
                                                                                                                                                                                    
\smallskip

While the significance of D'Angelo forms for commutator estimates has been known for a long time, their relevance for the Diederich--Forn\ae ss index is a recent discovery, due to Liu (\cite{Liu19}) and Yum (\cite{Yum-2021}). 
Consider $\eta\in (0,1)$ such that there exists a defining function $\rho_{\eta}$ whose D'Angelo form $\alpha_{\rho_{\eta}}$ satisfies
\begin{equation}\label{alpha-index}
 |\alpha_{\rho_{\eta}}(L)(p)|^{2} \le 2\frac{(1-\eta)}{\eta}\overline{\partial}\alpha_{\rho_{\eta}}(L,\overline{L})(p)\;;\;L\in\mathcal{N}_{p}\;,\;p\in b\Omega\;,
\end{equation}
where $\mathcal{N}_{p}$ denotes the null space of the Levi form at $p\in b\Omega$. Then
\begin{equation}\label{index2}
 DF(\Omega) = \sup\{\eta\in (0,1)\,|\,\exists \;\text{a defining function}\;\rho_{\eta}\;\text{such that}\;\eqref{alpha-index}\; \text{holds}\}\;;
\end{equation}
see \cite{Liu19}, Theorem 2.10, \cite{Yum-2021}, Theorem 1.1; 
further discussion is in \cite{Straube25b}, Section 4. Note that the set of $\eta$ with a defining function $\rho_{\eta}$ such that \eqref{alpha-index} holds is also an interval (because $\frac{1-\eta}{\eta}$ is a decreasing function). In \cite{Liu-Straube-2025}, the authors show the following. Let $0<\eta<DF(\Omega)$, with associated defining function $\rho_{\eta}=e^{h_{\eta}}\rho$ such that \eqref{alpha-index} holds. There is a constant $C$ independent of $\eta$ and a constant $M_{\eta}$ such that 
\begin{multline}\label{alphaest1}
 |\alpha_{\rho_{\eta}}(L)|^{2} \leq C(1-\eta)\left(\sum_{j,k}\frac{\partial^{2}(-h_{\eta})}{\partial z_{j}\partial\overline{z_{k}}}L_{j}\overline{L_{k}} + 2|L|^{2}\right) \\
 + M_{\eta}\sum_{j,k}\frac{\partial^{2}\rho}{\partial z_{j}\partial\overline{z_{k}}}L_{j}\overline{L_{k}}\;\;;\;L=\sum_{j}L_{j}\partial/\partial z_{j} \in T^{1,0}(b\Omega)
 \,.
\end{multline}
Moreover, there is $\delta_{\eta}>0$ such that this estimate holds uniformly on the level sets $\{\rho_{\eta}=-\delta\}$ for $\delta<\delta_{\eta}$.  Roughly speaking, the Hessian of $-h_{\eta}$ appears because $\overline{\partial}\alpha_{\rho_{\eta}}$ in \eqref{alpha-index} equals $\overline{\partial}(dh_{\eta}+\alpha_{\rho})$ (some care is needed though; see \cite{Dall'Ara-Mongodi-2021}, Section 4.3, especially Lemmas 4.6 and 4.7, for details), and the $M_{\eta}$--term is needed for when $L(p)\notin \mathcal{N}_{p}$ (see \cite{Liu-Straube-2025}, section 3). In order to obtain an estimate for forms from \eqref{alphaest1}, we define auxiliary vector fields associated to forms as follows. For a $(0,q)$--form $u=\sideset{}{'}\sum_{|K|=q}u_{K}d\overline{z_{K}}$ and an increasing $(q-1)$--tuple $J$, set $L^{J}_{u}=\sum_{j}u_{jJ}\partial/\partial z_{j}$ (when $q=1$, we understand $J$ to be the empty tuple, and $L_{u}=\sum_{j}u_{j}\partial/\partial z_{j}$). When $u$ is a form on the boundary, then for all $J$, $L^{J}_{u}\rho = \sum_{j}u_{jJ}(\partial\rho/\partial z_{j}) = 0$, so that \eqref{alphaest1} does apply to $L^{J}_{u}$. In sections \ref{main-estimate} and \ref{proof}, we will work in special boundary charts, with the usual (local) orthonormal basis $L_{1}, L_{2},\cdots, L_{n}$ for vetcor fields of type $(1,0)$, and dual basis $\omega_{1},\cdots, \omega_{n}$. If $u=\sum^{'}u_{K}\overline{\omega_{K}}$, the analogous construction in the boundary chart is $\tilde{L}_{u}^{J}=\sum_{j}u_{jJ}L_{j}$. Applying \eqref{alphaest1} to $\tilde{L}_{u}^{J}$, and summing over $J$, gives
\begin{multline}\label{alphaest2}
 \sideset{}{'}\sum_{J}|\alpha_{\rho_{\eta}}(\tilde{L}^{J}_{u})|^{2} \leq C(1-\eta)\left(\sideset{}{'}\sum_{J}\partial\overline{\partial}(-h_{\eta})\big(\tilde{L}_{u}^{J}\wedge \overline{\tilde{L}_{u}^{J}}\big) + 2\|u\|^{2}\right)\\ 
 + M_{\eta}\sideset{}{'}\sum_{J}\sum_{j,k}c_{jk}u_{jJ}\overline{u_{kJ}}\;\;,
\end{multline}
where $(c_{jk})$ are the coefficients of the Levi form in the boundary chart.
There is $\delta_{\eta}$ so that this estimate holds uniformly over the level sets $\rho_{\eta}=-\delta$ for $0\leq \delta<\delta_{\eta}$ (inherited from \eqref{alphaest1}). The above construction is not invariant (unless $q=1$). However, the quantity that will matter, namely $\sum^{'}_{J}|\alpha_{\rho_{\eta}}(L_{u}^{J})|^{2}$ (respectively $\sum^{'}_{J}|\alpha_{\rho_{\eta}}(\tilde{L}_{u}^{J})|^{2}$) is, up to a constant factor. The reason is that each $L_{u}^{J}$ is a linear combination, with smooth coefficients independent of $u$, of the $\tilde{L}_{u}^{J}$, and vice versa (this observation from \cite{Liu-Straube-2025}, page 5417, is based on the transformation formulas between the two orthonormal frames). Estimate \eqref{alphaest2} is the starting point, in the next section, for deriving the crucial $L^{2}$--type estimate needed in the proof of Theorem \ref{main-result}.

\section{The main estimate}\label{main-estimate}

To simplify notation from section \ref{ICDA}, now that there is an additional subscript $\eta$, we replace the double subscripts $\rho_{\eta}$ by the simple subscript $\eta$. That is  $T_{\rho_{\eta}}$ becomes $T_{\eta}$, $\alpha_{\rho_{\eta}}$ becomes $\alpha_{\eta}$, and so on. The estimate in Proposition \ref{main-proposition} is, for our purposes, the principal consequence of having DF--index 1.
\begin{proposition}\label{main-proposition}
Let $\O\subset\C^n$ be a bounded pseudoconvex domain with $DF(\Omega)=1$, and satisfying condition $D(q)$ for some $1\le q\le n-2$. There are constants $C$ and $\eta_{0}$, $0<\eta_{0}<1$, with the following property. For $\eta_{0}<\eta<1$, there are constants $C_{\eta}$ and $\delta_{\eta}>0$ such that for all $u\in C^\infty_{(0,q)}(b\Omega)$, we have
\begin{equation}\label{estimate-main-proposition-euclidean}
    \isum_{|J|=q-1}\int_{b\Omega}\left|\alpha_{\eta}(L_u^J) \right|^2 \le C(1-\eta)\left( \|\dbar_b u\|^2 + \|\dbar_b^* u\|^2 \right) + C_\eta\|u\|_{-1}^2\,,
\end{equation}
and, moreover, this estimate holds on the level sets $\{\rho_{\eta}=-\delta\}$ for $\delta<\delta_{\eta}$ with the same constants.
\end{proposition}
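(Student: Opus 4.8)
The plan is to start from \eqref{alphaest2}, whose left--hand side agrees, up to a fixed frame--change factor, with that of \eqref{estimate-main-proposition-euclidean}, and to dispose of the three groups of terms on its right--hand side. Throughout, write $\partial\overline\partial f(L_u^J):=\sum_{j,k}\frac{\partial^2 f}{\partial z_j\partial\overline{z_k}}u_{jJ}\overline{u_{kJ}}$ for the complex Hessian of a real function $f$ contracted with $L_u^J$. Since $1\le q\le n-2$, the basic estimate \eqref{basic} holds with no orthogonality restriction. Set $\lambda_\eta:=C(1-\eta)/M_\eta$. Rearranging \eqref{alphaest1} shows that the Hermitian form $\partial\overline\partial\rho+\lambda_\eta\,\partial\overline\partial(-h_\eta)$ is bounded below by $-2\lambda_\eta$ times the identity on $T^{1,0}(b\Omega)$, with the constant $2$ independent of $\eta$ (this is where the ``$2|L|^2$'' in \eqref{alphaest1} enters). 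Combining the two non--trivial terms of \eqref{alphaest2} and using $\isum_{|J|=q-1}|L_u^J|^2=q|u|^2$ together with $q\ge1$, the $C(1-\eta)\cdot2\|u\|^2$ term is absorbed and one is left with $\isum_{|J|=q-1}\int_{b\Omega}|\alpha_{\rho_\eta}(L_u^J)|^2\le M_\eta\,\isum_{|J|=q-1}\int_{b\Omega}\bigl(\partial\overline\partial\rho+\lambda_\eta\,\partial\overline\partial(-h_\eta)+2\lambda_\eta\bigr)(L_u^J)$, the integrand being a nonnegative Hermitian form. Hence it suffices to prove
\begin{equation*}
\isum_{|J|=q-1}\int_{b\Omega}\bigl(\partial\overline\partial\rho+\lambda_\eta\,\partial\overline\partial(-h_\eta)+2\lambda_\eta\bigr)(L_u^J)\ \le\ \lambda_\eta\bigl(\|\dbar_b u\|^2+\|\dbar_b^* u\|^2\bigr)+\tfrac{C_\eta}{M_\eta}\|u\|_{-1}^2
\end{equation*}
and then multiply through by $M_\eta$; the resulting coefficient of the $\dbar_b$--norms is $M_\eta\lambda_\eta=C(1-\eta)$, as required.

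The heart of the matter is the Levi--form term. By pseudoconvexity and the elementary inequality for sums of eigenvalues, $\isum_{|J|=q-1}\partial\overline\partial\rho(L_u^J)\le\tau|u|^2$ pointwise, where $\tau$ is the trace of the Levi form; I claim that for every $\varepsilon>0$
\begin{equation*}
\int_{b\Omega}\tau|u|^2\ \le\ \varepsilon\bigl(\|\dbar_b u\|^2+\|\dbar_b^* u\|^2\bigr)+C_\varepsilon\|u\|_{-1}^2 .
\end{equation*}
To prove this, fix $c>0$ and split $b\Omega=\{\tau<c\}\cup\{\tau\ge c\}$. On $\{\tau<c\}$, $\int_{\{\tau<c\}}\tau|u|^2\le c\|u\|^2\le cC\bigl(\|\dbar_b u\|^2+\|\dbar_b^* u\|^2\bigr)$ by \eqref{basic}, which is small once $c$ is. On $\{\tau\ge c\}$, condition $D(q)$ gives $\varepsilon_0\tau\le\sigma_q\le(1-\varepsilon_0)\tau$ for every $q$--sum, hence $\varepsilon_0 c\le\sigma_q\le\tau-\varepsilon_0 c$ there; thus condition $Y(q)$ holds uniformly on an open neighborhood $U_c\supset\{\tau\ge c\}$, so $\dbar_b$ satisfies a subelliptic $\tfrac12$--estimate on $U_c$ (here $1\le q\le n-2$). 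Choosing $\chi\in C^\infty_c(U_c)$ with $\chi\equiv1$ on $\{\tau\ge c\}$, noting that $[\dbar_b,\chi]$ and $[\dbar_b^*,\chi]$ are of order zero, and invoking \eqref{basic}, one gets $\|\chi u\|_{1/2}^2\le C_c\bigl(\|\dbar_b u\|^2+\|\dbar_b^* u\|^2\bigr)$; the interpolation inequality $\|v\|_0^2\le\varepsilon'\|v\|_{1/2}^2+C_{\varepsilon'}\|v\|_{-1}^2$ and boundedness of multiplication by $\chi$ on $H^{-1}$ then give $\int_{\{\tau\ge c\}}|u|^2\le\varepsilon'C_c\bigl(\|\dbar_b u\|^2+\|\dbar_b^* u\|^2\bigr)+C_{\varepsilon',c}\|u\|_{-1}^2$. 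Choosing first $c$, then $\varepsilon'$, small (in terms of $\varepsilon$) proves the claim. Applying it with $\varepsilon=\lambda_\eta$ handles the Levi term.

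The remaining contributions are routine. The term $2\lambda_\eta\isum_{|J|=q-1}\int_{b\Omega}|L_u^J|^2=2q\lambda_\eta\|u\|^2$ is at once $\le 2qC\lambda_\eta\bigl(\|\dbar_b u\|^2+\|\dbar_b^* u\|^2\bigr)$ by \eqref{basic}. For $\lambda_\eta\isum_{|J|=q-1}\int_{b\Omega}\partial\overline\partial(-h_\eta)(L_u^J)$, add the nonnegative Levi--form term and apply the Kohn--Morrey--H\"ormander estimate for the boundary of Section \ref{appendix} with weight $-h_\eta$: this bounds $\isum_{|J|=q-1}\int_{b\Omega}\bigl(\partial\overline\partial(-h_\eta)+\partial\overline\partial\rho\bigr)(L_u^J)$ by $C_\eta\bigl(\|\dbar_b u\|^2+\|\dbar_b^* u\|^2+\|u\|^2\bigr)\le C_\eta'\bigl(\|\dbar_b u\|^2+\|\dbar_b^* u\|^2\bigr)$; the control on $\rho_\eta=e^{h_\eta}\rho$ furnished by the Diederich--Forn\ae ss construction in \cite{Liu-Straube-2025} keeps $\lambda_\eta C_\eta'$ of size $o(1)$ as $\eta\to1$, so this term is also absorbed into $\lambda_\eta\bigl(\|\dbar_b u\|^2+\|\dbar_b^* u\|^2\bigr)$. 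Putting the pieces together yields \eqref{estimate-main-proposition-euclidean}. Finally, every ingredient used --- \eqref{alphaest2}, \eqref{basic}, the boundary Kohn--Morrey--H\"ormander estimate, the local subelliptic estimate, and condition $D(q)$ (on $\{\rho_\eta=-\delta\}$ this refers to the Levi form of $\rho+\delta$, which coincides with that of $\rho$) --- is available with constants uniform over the level sets $\{\rho_\eta=-\delta\}$, $0\le\delta<\delta_\eta$, so the estimate persists there with the same constants.

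The step I expect to be the main obstacle is exactly the one that places the factor $(1-\eta)$ in front of the $\dbar_b$--norms: the constant $M_\eta$ in \eqref{alphaest1} need not stay bounded as $\eta\to1$, and it multiplies the Levi--form term, so the gain can only be recovered through the claim above --- that $\int_{b\Omega}\tau|u|^2$ can be absorbed into an arbitrarily small multiple of $\|\dbar_b u\|^2+\|\dbar_b^* u\|^2$ at the cost of a large, $\eta$--dependent multiple of $\|u\|_{-1}^2$. That claim hinges on the fact that, under $D(q)$, the Levi form degenerates only where it is already small, so that the failure of subellipticity is confined to a region on which the weight $\tau$ is negligible; this is the one place where $D(q)$ is genuinely used in this proposition. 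The accounting of the $\eta$--dependent constants attached to the $\partial\overline\partial(-h_\eta)$ term is more tedious than conceptual, but must be carried out with the particular defining function $\rho_\eta$ at hand.
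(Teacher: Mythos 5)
Your proposal breaks at the step where you dispose of the $\partial\overline{\partial}(-h_\eta)$ term. The boundary Kohn--Morrey--H\"ormander estimate \eqref{tkmh} does \emph{not} bound $\sum_J'\int_{b\Omega}\partial\overline{\partial}(-h_\eta)(L_u^J)$ (or the sum with the Levi form added) by $C_\eta\bigl(\|\dbar_b u\|^2+\|\dbar_b^* u\|^2+\|u\|^2\bigr)$, because when \eqref{tkmh} is rearranged to isolate the Hessian term, the right-hand side contains
\begin{equation*}
-2\,\mathrm{Re}\isum_{K}\sum_{j,k}\int_{b\Omega}c_{jk}\Bigl(T-\tfrac12\tfrac{\partial\varphi}{\partial\nu}\Bigr)u_{jK}\,\overline{u_{kK}}\,e^{-\varphi},
\end{equation*}
which is a genuine order-$1$ quantity in $u$. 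Integration by parts in $T$ does not kill it: with smooth Levi coefficients $c_{jk}$ one only gets that the \emph{imaginary} part of $\sum_K'\sum_{j,k}\int c_{jk}(Tu_{jK})\overline{u_{kK}}$ is $\mathcal{O}(\|u\|^2)$; the real part is the dangerous term. Its sign is governed by the microlocal frequency of $u$, and controlling it (with constants that can be made independent of $\eta$ up to $C_\eta\|u\|_{-1}^2$) is exactly what the paper's $\eta$-dependent microlocal decomposition $u=\mathcal{P}^{+,\eta}u+\mathcal{P}^{0,\eta}u+\mathcal{P}^{-,\eta}u$, the shifted symbols built from $\sigma(\tau-3\tilde M_\eta)$, and the ad hoc $3q\|\overline{L}u\|^2$ modification for the negative piece are designed to do. Your proof omits this machinery entirely, so the central estimate is not established. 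Relatedly, the conversion from the weighted norms $\|\cdot\|_{-h_\eta}$ appearing in \eqref{tkmh} back to the unweighted $\|\dbar_b u\|$, $\|\dbar_b^*u\|$ produces terms like $\|(L_jh_\eta)u_{jK}\|$, which the paper must bootstrap via the relation $dh_\eta=\alpha_\eta-\alpha_\rho$ (see \eqref{wt-eq1}--\eqref{wt-eq11}); you do not address this at all.

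A second, independent gap is the assertion that ``$\lambda_\eta C'_\eta$ is of size $o(1)$ as $\eta\to1$.'' Here $C'_\eta$ comes from the KMH estimate with weight $-h_\eta$ and thus scales with the size of $h_\eta$ and its derivatives, while $\lambda_\eta=C(1-\eta)/M_\eta$ depends on the constant $M_\eta$ from \eqref{alphaest1}. Neither $M_\eta$ nor the blowup of $h_\eta$ is controlled as $\eta\to1$, and nothing in \cite{Liu-Straube-2025} provides the needed relation; this is precisely the difficulty the paper's microlocal shift by $\tilde M_\eta$ circumvents. On the positive side, your observation that condition $D(q)$ forces the Levi form to degenerate only where its trace is small, so that $\int_{b\Omega}\tau|u|^2\le\varepsilon\bigl(\|\dbar_b u\|^2+\|\dbar_b^*u\|^2\bigr)+C_\varepsilon\|u\|_{-1}^2$ for every $\varepsilon>0$, is correct and attractive; the paper instead absorbs the $M_\eta$ Levi-form term into the microlocal setup by choosing $\tilde M_\eta$ large and restricting the supports of $\chi^{\pm,\eta}$. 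But on its own, your trace estimate does not resolve the $T$-derivative issue above, so the proof as written is incomplete.
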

\begin{proof}
The uniformity part will follow by keeping track of the constants and, where partitions of unity over $b\Omega$ and boundary charts are used, using instead partitions of unity and charts over a neighborhood of $b\Omega$. 

\smallskip
It suffices to prove the proposition for forms supported in suitable small patches of the boundary (independent of $\eta$), the general case then follows via a partition of unity.
In view of the discussion at the end of the previous section, we may replace the $L_{u}^{J}$ in \eqref{estimate-main-proposition-euclidean} by $\tilde{L}_{u}^{J}$. For $u$ supported in a special boundary chart as in Proposition \ref{proposition-kohn-morrey-hormander} in section \ref{appendix}, we look at the term involving the Hessian of $-h_{\eta}$ on the right hand side of \eqref{alphaest2}. Proposition \ref{proposition-kohn-morrey-hormander}, with $\varphi = -h_{\eta}$, and $(e^{-h_{\eta}}/2)u$ in place of $u$, gives
\begin{multline}\label{prop-eq1}
         \normwh{\overline{L}(e^{-h_\eta/2}u)}^2  + 2\text{Re} \isum_{K}\sum_{j,k}\int_{b\O}  c_{jk}\left(\left( T + \frac{1}{2}\frac{\partial h_\eta}{\partial\nu} \right) u_{jK}\right)\overline{u_{kK}}  \\
         + 2\isum_{K}\int_{b\O} \partial\overline{\partial}(-h_{\eta})\big(\tilde{L}_{u}^{K}\wedge\overline{\tilde{L}_{u}^{K}}\big)  +  \bigo{\norm{u}^2} \\
        \lesssim \normwh{\dbarb (e^{-h_\eta/2}u)}^2 + \normwh{\dbarbstarwh (e^{-h_\eta/2}u)}^2\;;
\end{multline}
here $\overline{L}(\cdot)$ denotes the vector of all $\overline{L_{j}}$--derivatives and the constant in the $\mathcal{O}(\|u\|^{2})$ is independent of $\eta$. We rewrite \eqref{prop-eq1} as
\begin{multline}\label{prop-eq2}
\isum_{K}\int_{b\O} \partial\overline{\partial}(-h_{\eta})\big(\tilde{L}_{u}^{K}\wedge\overline{\tilde{L}_{u}^{K}}\big)
           \lesssim \normwh{\dbarb (e^{-h_\eta/2}u)}^2 + \normwh{\dbarbstarwh (e^{-h_\eta/2}u)}^2  \\
          - 2\text{Re} \isum_{K}\sum_{j,k}\int_{b\O}  c_{jk}\big((T+\frac{1}{2}\frac{\partial h_{\eta}}{\partial\nu}) u_{jK}\big)\overline{u_{kK}} + \bigo{\norm{u}^2}\;.
\end{multline}
We have used that $\normwh{\overline{L}(e^{-h_\eta/2}u)}^2 \ge 0$. We now apply the estimate \eqref{prop-eq2} to \eqref{alphaest2} to obtain
\begin{multline}\label{prop-eq3}
        \isum_{K} \int_{b\O} \abs{\alpha_\eta(\tilde{L}_{u}^K)}^2  \lesssim (1-\eta)\Big( \normwh{\dbarb (e^{-h_\eta/2}u)}^2 + \normwh{\dbarbstarwh (e^{-h_\eta/2}u)}^2  \\
         \;\;\;\;\;\;\;\;\;\;\;\;\;\;\;\;\;\;\;\;\;\;\;\;\;- 2\text{Re} \isum_{K}\sum_{j,k}\int_{b\O}  c_{jk}\big((T+\frac{1}{2}\frac{\partial h_{\eta}}{\partial\nu}) u_{jK}\big)\overline{u_{kK}} + \bigo{\norm{u}^2} \Big)  \\
         + M_\eta \isum_{K} \sum_{j,k} \int_{b\O}c_{jk}u_{jK}\overline{u_{kK}}.
\end{multline}
Choose $\tilde{M}_{\eta}$ big enough so that $\tilde{M}_\eta \geq M_\eta/(1-\eta)-(1/2)(\partial h_{\eta}/\partial\nu)$. Then \eqref{prop-eq3} becomes
\begin{multline}\label{prop-eq3a}
        \isum_{K} \int_{b\O} \abs{\alpha_\eta(\tilde{L}_{u}^K)}^2  \lesssim (1-\eta)\bigg( \normwh{\dbarb (e^{-h_\eta/2}u)}^2 + \normwh{\dbarbstarwh (e^{-h_\eta/2}u)}^2 \\
         - 2\text{Re}\big( \isum_{K}\sum_{jk} \int_{b\O} c_{jk} \big((T - \tilde{M}_\eta) u_{jK}\big)\overline{u_{kK}}\, \big)
        +\bigo{\norm{u}^2}\bigg)
\end{multline}
(since the Levi from is positive semidefinite). 

\smallskip

To proceed further, we may assume that $u$ is supported in a local boundary chart where we have the usual microlocalizations, which we now recall (see \cite{Kohn85, Kohn-2002, Biard-Straube-2019, MunasingheStraube12}). Assume that $U'$ is an open subset of $b\O$ contained in a special boundary chart. We will work with forms supported in a fixed open set $U \subset\subset U'$ small enough so that the following makes sense. Choose coordinates on $b\O$ in $U'$ of the form $(x_1, \dots, x_{2n-2}, t)$
such that $T = (-i)\frac{\partial}{\partial t}$. Denote the `dual' coordinates (on the Fourier transform side) in $\R^{2n-1}$ by $(\xi_1,\dots,\xi_{2n-2},\tau) = (\xi,\tau)$. Next, choose $\chi\in C^\infty_0(U')$ with $\chi\equiv 1$ in a neighborhood of $\overline{U}$. On the unit sphere $\{\norm{\xi}^2 + \abs{\tau}^2 = 1 \}$, choose a smooth function $g$ with range in $[0,1]$ and satisfying the following property: $\text{supp}(g)\subseteq \{ \tau > \norm{\xi}/2 \} \;\text{with}\, g\equiv 1 \;\text{on}\;\{ \tau \ge 3\norm{\xi}/4\}\;$. For $|(\xi,\tau)|\geq 3/4$, we set $\chi^{+}(\xi,\tau)=g(\frac{(\xi,\tau)}{|(\xi,\tau)|})$ and extend it smoothly so that $\chi^{+}(\xi,\tau) = 0$ when $|(\xi,\tau)|\leq 1/2$. As usual, we set $\chi^{-}(\xi,\tau)=\chi^{+}(-\xi,-\tau)$, and $\chi^{0}=1-\chi^{+}-\chi^{-}$, with associated pseudodifferential operators $\mathcal{P}^{+}$, $\mathcal{P}^{-}$, and $\mathcal{P}^{0}$. This is the standard set up which we  now modify. The idea is to restrict the support of $\chi^{+}$ and $\chi^{-}$ so that $|\tau|$ is sufficiently big. Let $\tilde{M}_\eta >0$ be the constant from \eqref{prop-eq3a}. Denote be $\sigma$ a smooth nondecreasing function on $\mathbb{R}$ with $\sigma(x)\equiv 0$, $x\leq 1$, and $\sigma(x)\equiv 1$, $x\geq 2$. We set
\begin{equation}\label{chi-plus-definition}
\chi^{+,\eta} = \chi^{+}\sigma(\tau-3\tilde{M}_{\eta})\;,\;\chi^{-,\eta} = \chi^{-}\sigma(-\tau-3\tilde{M}_{\eta})\;,\;\text{and}\;\chi^{0,\eta}=1 - \chi^{+,\eta} - \chi^{-,\eta}\;. 
\end{equation}
Then $\chip$ and $\chim$ are supported in $\{(\xi,\tau) : \tau \ge 1 + 3\tilde{M}_{\eta}\}$ and $ \{(\xi,\tau) : \tau \leq -1 - 3\tilde{M}_{\eta}\}$, respectively, and
$\chiz$ is supported in an elliptic region that also depends on $\eta$. It will be important later that these modified symbols differ from the standard ones by compactly supported terms (albeit with support depending on $\eta$). Denote the Fourier transform on $\R^{2n-1}$ by $\F$. For a $(0,q)$-form $u$ supported in $U$, we set
\begin{equation}\label{definition-microlocal-operators}
    \pplus u = \chi \F^{-1}(\chip \F u), \quad \pminus u = \chi \F^{-1}(\chim \F u), \quad \pzero u = \chi \F^{-1}(\chiz \F u),
\end{equation}
where the operators act coefficient-wise with respect to the frame $\{\omega_1,\dots,\omega_{n-1}\}$. Because of the factor $\chi$, the forms in \eqref{definition-microlocal-operators} 
can be thought of as forms on the whole boundary. For $u\in L^{2}_{(0,q)}(b\Omega)$, supported in $U$, we have the decomposition:
\begin{equation}\label{microlocal-decomposition}
    u = \pplus u + \pminus u + \pzero u\;.
\end{equation}
It now suffices to prove that $\sum^{'}_{|J|=q-1}\int_{b\Omega}|\alpha_{\eta}(\tilde{L}_{\mathcal{P}^{k,\eta}u}^{J})|^{2}$ is dominated by the right hand side of \eqref{estimate-main-proposition-euclidean}, for $k=+,0,-$.

\smallskip

We start with $\mathcal{P}^{+,\eta}u$ and first estimate the Re--term in the second line of \eqref{prop-eq3a} for $\mathcal{P}^{+,\eta}u$ from below. We have
\begin{equation}\label{prop-eq4}
    \begin{split}
        \isum_{K}\sum_{j,k} & \int_{b\O} c_{jk} \left(( T - \tilde{M}_\eta) (\pplus u)_{jK}\right)\overline{(\pplus u)_{kK}} \\ 
        = & \isum_{K}\sum_{j,k} \int_{b\O} c_{jk} \left( ( T - \tilde{M}_\eta ) (\chi\F^{-1} (\chip \F u_{jK})) \right) \overline{\chi\F^{-1} (\chip \F u_{kK})}.
    \end{split}
\end{equation}
Commuting $\chi$ with $T$, we see that the right hand side in \eqref{prop-eq4} equals
\begin{multline}\label{prop-eq5}
\isum_{K}\sum_{j,k} \int_{b\O} c_{jk} \chi\left( ( T - \tilde{M}_\eta ) ( \F^{-1} (\chip \F  u_{jK})) \right) \overline{(\chi \F^{-1} (\chip \F  u_{kK}))}
        \\  + \isum_{K}\sum_{j,k} \int_{b\O} c_{jk} \left( [T,\chi] ( \F^{-1} (\chip \F u_{jK})) \right) \overline{(\chi \F^{-1} (\chip \F u_{kK}))}.
    \end{multline}
Using the fact that $[T,\chi]$ is of order zero and the Plancherel theorem, the last term in \eqref{prop-eq5} can be estimated by $\|u\|^{2}\lesssim \|\overline{\partial}_{b}u\|^{2}+\|\overline{\partial}_{b}^{*}u\|^{2}$, with a constant that does not depend on $\eta$.  Now consider the first term in \eqref{prop-eq5}. The following calculations are similar to those in Lemma 2.5 in \cite{Kohn-2002}. Denote by $\tilde{\chi}^{+}$ a smooth function with values in $[0,1]$ that is supported in $\{\tau \geq \|\xi\|/3\}$, and that is identically one in a neighborhood of the support of $\chi^{+}$ (constructed analogously to $\chi^{+}$, with a suitable function $\tilde{g}$). Then set $\tilde{\chi}^{+,\eta}=\tilde{\chi}^{+}\sigma(\tau-3\tilde{M}_{\eta}+1)$. Note that $\tilde{\chi}^{+,\eta}$ is identically one on the support of $\chi^{+,\eta}$ and on the support of $\tilde{\chi}^{+,\eta}$, we have $\tau-\tilde{M}_\eta \ge 2\tilde{M}_\eta \ge 1$ (we may assume that $\tilde{M}_\eta \ge 1/2$). Denote by $R^\eta$ the pseudodifferential operator of order $1/2$ whose symbol is $(\tau - \tilde{M}_\eta)^{1/2}\chipt$. Since $\chipt\equiv 1$ on $\supp(\chip)$, the first term in \eqref{prop-eq5} can be written, after switching to integration over $\mathbb{R}^{2n-1}$, as
\begin{multline}\label{prop-eq7}
 \isum_{K}\sum_{j,k} \int_{\mathbb{R}^{2n-1}}\mu c_{jk} \chi\F^{-1}\left( (\tau - \tilde{M}_\eta) (\chipt)^2 \chip \F u_{jK}\right) \overline{(\chi \F^{-1} (\chip \F u_{kK}))} \\
          = \isum_{K}\sum_{j,k} \int_{\mathbb{R}^{2n-1}}\mu\chi^{2} c_{jk} (R^\eta)^2\F^{-1}\left( \chip \F u_{jK}\right) \overline{ \F^{-1} (\chip \F  u_{kK})}\;,
\end{multline}
where $\mu$ denotes the density of the the surface measure on $b\Omega$. We have used that the symbol of $R^{\eta}$ is independent of the $(x_{1},\cdots,x_{2n-2},t)$ variables. Taking real parts and using that $R^{\eta}$ is selfadjoint, we have
\begin{multline}\label{prop-eq8}
 \text{Re}\isum_{K}\sum_{j,k} \int_{\mathbb{R}^{2n-1}} \mu\chi^{2} c_{jk} (R^\eta)^2\F^{-1}\left( \chip \F u_{jK}\right) \overline{ \F^{-1} (\chip \F  u_{kK})} \\
      = \text{Re}\bigg( \isum_{K}\sum_{j,k} \int_{\mathbb{R}^{2n-1}} [\mu \chi^{2}c_{jk}, R^\eta] R^\eta\F^{-1}\left( \chip \F u_{jK}\right) \overline{ \F^{-1} (\chip \F  u_{kK})} \\
        + \isum_{K} \sum_{j,k} \int_{\mathbb{R}^{2n-1}} \mu\chi^{2} c_{jk}  R^\eta\F^{-1}\left( \chip \F u_{jK}\right) \overline{R^\eta \F^{-1} (\chip \F  u_{kK})}\;\bigg) \\
        \ge \text{Re}\isum_{K}\sum_{j,k} \int_{\mathbb{R}^{2n-1}} [\mu\chi^{2} c_{jk}, R^\eta] R^\eta\F^{-1}\left( \chip \F u_{jK}\right) \overline{ \F^{-1} (\chip \F u_{kK})}.
\end{multline}
For the last inequality we have used pseudoconvexity of $\O$ and $\mu\chi^{2}\geq 0$ to see that the term in the third line of \eqref{prop-eq8} is nonnegative. We need a lower bound on the right hand side of \eqref{prop-eq8}. Such a bound will follow from an estimate on the absolute value of the expression inside Re.

\smallskip

Since $\chi R^\eta$ is of order $1/2$, the operator $[\mu c_{jk}\chi_s,\chi R^\eta]$ is of order $-1/2$, so $[\mu c_{jk}\chi,\chi R^\eta]R^\eta$ is of order zero (see \cite{Stein93}, Theorem 2 on page 237 for the relevant symbolic calculus). However, the constant this argument gives in the $L^{2}$--estimate depends on $\eta$, and further analysis is required. Modulo a symbol in $S^{-3/2}$, the symbol of $[\mu c_{jk}\chi^{2}, R^\eta]$ is
\begin{equation}\label{prop-eq-9}
    \begin{split}
        \sum_{|\alpha|=1} &\dfrac{1}{2\pi i} \bigg ( \partial^\alpha_{\tau,\xi}(\mu c_{jk}\chi) \,\partial^\alpha_{t,x}\left((\tau-\tilde{M}_\eta)^{1/2}\chipt(\tau,\xi)\right) 
        \\ & \qquad\qquad  - \partial^\alpha_{\tau,\xi}\left((\tau-\tilde{M}_\eta)^{1/2}\chipt(\tau,\xi)\right) \partial^\alpha_{t,x}(\mu c_{jk}\chi)\bigg ),
    \end{split}
\end{equation}
where $\alpha$ is a multi-index of order $1$ and $\partial^\alpha$ denotes partial derivatives. Since $\partial^\alpha_{\tau,\xi}(\mu c_{jk}\chi)=0$, \eqref{prop-eq-9} becomes
\begin{equation}\label{prop-eq10}
    -\dfrac{1}{2\pi i} \left( \dfrac{\chipt(\tau,\xi)}{2(\tau-\tilde{M}_\eta)^{1/2}} + (\tau-\tilde{M}_\eta)^{1/2}\partial_{\tau,\xi}\chipt(\tau,\xi) \right) \partial_x(\mu c_{jk}\chi).
\end{equation}
So modulo a symbol in $S^{-1}$, the symbol of $[\mu c_{jk}\chi^{2}, R^{\eta}]R^\eta$ is
\begin{equation}\label{prop-eq11}
    \begin{split}
        &-\dfrac{1}{2\pi i} \left( \dfrac{\chipt(\tau,\xi)}{2(\tau-\tilde{M}_\eta)^{1/2}} + (\tau-\tilde{M}_\eta)^{1/2}\partial_{\tau,\xi}\chipt(\tau,\xi) \right)\partial_{t,x}(\mu c_{jk}\chi) (\tau - \tilde{M}_\eta)^{1/2}\chipt(\tau,\xi) 
       \\ & = -\dfrac{1}{2\pi i} \left( \frac{1}{2} (\chipt(\tau,\xi))^2\partial_{t,x}(\mu c_{jk}\chi)  + (\tau-\tilde{M}_\eta)\chipt(\tau,\xi)\partial_{\tau,\xi}\chipt(\tau,\xi)\partial_{t,x}(\mu c_{jk}\chi)  \right).
    \end{split}
\end{equation}
In view of the uniform bonds on $|\tilde{\chi}^{+,\eta}(\tau,\xi)|^{2}$ and on $|\partial_{t,x}(\mu c_{j,k}\chi)|^{2}$, the contribution to $[\mu c_{jk}\chi,\chi R^\eta]R^\eta$ from the first term is uniformly (with respect to $\eta$) bounded in $L^{2}$. The contribution from the second term is (modulo a constant factor)
\begin{multline}\label{1}
 \mathcal{F}^{-1}\big((\tau-\tilde{M}_\eta)\chipt(\tau,\xi)\partial_{\tau,\xi}\chipt(\tau,\xi)\partial_{t,x}(\mu c_{jk}\chi)\mathcal{F}u\big) \\
 =(T-\tilde{M}_{\eta})\partial_{t,x}(\mu c_{jk}\chi)\mathcal{F}^{-1}\big(\chipt(\tau,\xi)\partial_{\tau,\xi}\chipt(\tau,\xi)\mathcal{F}u\big)\;.
\end{multline}
To estimate the $L^{2}$--norm of the right hand side of \eqref{1}, it suffices to estimate the $L^{2}$--norm of $(-i\frac{\partial}{\partial t}-\tilde{M}_{\eta})\mathcal{F}^{-1}\big(\chipt(\tau,\xi)$ $\partial_{\tau,\xi}\chipt(\tau,\xi)\big)\mathcal{F}u$ ($[T-\tilde{M}_{\eta}, \partial_{t,x}(\mu c_{j,k}\chi)] = [T, \partial_{t,x}(\mu c_{j,k}\chi)]$ is bounded in $L^{2}$, uniformly in $\eta$, since derivatives of $\mu c_{jk}\chi$ are bounded). We first estimate the contribution for the $(-i\frac{\partial}{\partial t})$--term, that is, $\|\mathcal{F}^{-1}\big(\chipt(\tau,\xi)\partial_{\tau,\xi}\chipt(\tau,\xi)\mathcal{F}u\big)\|_{1}$. Observe that 
\begin{equation}\label{2}
\chipt = \tilde{\chi}^{+}\sigma(\tau-3\tilde{M}_{\eta}+1) = \tilde{\chi}^{+} + \tilde{\chi}^{+}\big(\sigma(\tau-3\tilde{M}_{\eta}+1)-1\big)\;,
\end{equation}
and that the second term on the right hand side is compactly supported (but with support depending on $\eta$). It follows that $\tilde{\chi}^{+,\eta}\partial_{\tau,\xi}\tilde{\chi}^{+,\eta} = \tilde{\chi}^{+}\partial_{\tau,\xi}\tilde{\chi}^{+} + \psi_{\eta}$, where $\psi_{\eta}$ is compactly supported (the support depends on $\eta$). Then
\begin{multline}\label{split}
 \|\mathcal{F}^{-1}\big(\chipt\partial_{\tau,\xi}\chipt\mathcal{F}u\big)\|_{1} 
 \leq \|\mathcal{F}^{-1}\big(\tilde{\chi}^{+}\partial_{\tau,\xi}\tilde{\chi}^{+}\mathcal{F}u\|_{1} + \|\mathcal{F}^{-1}\big(\psi_{\eta}\mathcal{F}u\big)\|_{1} \\
 \lesssim \|u\| + C_{\eta}\|u\|_{-1}
 \lesssim \|\overline{\partial}_{b}u\| + \|\overline{\partial}_{b}^{*}u\| + C_{\eta}\|u\|_{-1}\;.
\end{multline}
The second to last inequality follows because $|\tilde{\chi}^{+}\partial_{\tau,\xi}\tilde{\chi}^{+}|\lesssim (1+|(\tau,\xi)|^{2})^{-1/2}$ for the first term, and because $\psi_{\eta}$ is compactly supported for the second. Again using that $\|u\|$ dominates $\|\mathcal{F}^{-1}\big(\tilde{\chi}^{+}\partial_{\tau,\xi}\tilde{\chi}^{+}\mathcal{F}u\|_{1}$ and interpolation gives that the contribution of the $M_{\eta}$--term to $\big\|(-i\frac{\partial}{\partial t}-\tilde{M}_{\eta})\mathcal{F}^{-1}\big(\chipt$ $\partial_{\tau,\xi}\chipt\big)\mathcal{F}u\big\|$ is also dominated by $\|u\| + C_{\eta}\|u\|_{-1}\lesssim \|\overline{\partial}_{b}u\| + \|\overline{\partial}_{b}^{*}u\| + C_{\eta}\|u\|_{-1}$.

Combining the above and using Cauchy--Schwarz lets us estimate the right hand side of \eqref{prop-eq8}:
\begin{multline}\label{prop-eq12}
 \left|\isum_{K}\sum_{j,k} \int_{b\O} [\mu\chi^{2} c_{jk}, R^\eta] R^\eta\F^{-1}\left( \chip \F u_{jK}\right) \overline{ \F^{-1} (\chip \F u_{kK})}\right| \\
 \leq C(\|\overline{\partial}_{b}u\|^{2}+\|\overline{\partial}_{b}^{*}u\|^{2})+C_{\eta}\|u\|_{-1}^{2}\;.
\end{multline}
We have also used that for forms supported in $U^{\prime}$, the norms on $b\Omega$ and those on $\mathbb{R}^{2n-1}$ are equivalent (uniformly in $\eta$) and that the contribution to $[\mu c_{jk}\chi,\chi R^\eta]R^\eta$ due to the portion of the symbol that is in $S^{-1}$ has its $L^{2}$--norm dominated by $C_{\eta}\|\mathcal{F}^{-1}\chi^{+,\eta}\mathcal{F}u_{j,K}\|_{-1}$. In turn, this quantity is dominated by $C_{\eta}\|u\|_{-1}$ ($\chi^{+,\eta}$ is a bounded multiplier).

\smallskip

Combining \eqref{prop-eq4}--\eqref{prop-eq8} and \eqref{prop-eq12} gives
\begin{multline}\label{prop-eq14}
       -(1-\eta)2\text{Re}\left( \isum_{K}\sum_{jk} \int_{b\O} c_{jk} \left( (T - \tilde{M}_\eta ) \pplus u_{jK}\right) \overline{\pplus u_{kK}} \right) \\
       \leq C(1-\eta)(\|\overline{\partial}_{b}u\|^{2}+\|\overline{\partial}_{b}^{*}u\|^{2}) + C_\eta\norm{u}^2_{-1},
\end{multline}
and thus, with \eqref{prop-eq3a},
\begin{multline}\label{prop-eq12ab}
\sideset{}{'}\sum_{J}|\alpha_{\eta}(\tilde{L}^{J}_{\mathcal{P}^{+,\eta}u})|^{2}\\
\lesssim (1-\eta)\left(\|\overline{\partial}_{b}(e^{-h_{\eta}/2}\mathcal{P}^{+,\eta}u)\|_{-h_{\eta}}^{2}+\|\overline{\partial}_{b,-h_{\eta}}^{*}(e^{-h_{\eta}/2}\mathcal{P}^{+,\eta}u)\|_{-h_{\eta}}^{2}+\|\overline{\partial}_{b}u\|^{2}+\|\overline{\partial}_{b}^{*}u\|^{2}\right) \\
+ C_{\eta}\|u\|_{-1}^{2}\;.\;\;\;\;\;\;\;\;
\end{multline}
We will deal with the weighted terms later and for now proceed to $\mathcal{P}^{-,\eta}u$.

\smallskip

In order to deal with $\pminus u$, we first modify \eqref{prop-eq1}. But rather than keeping $\|\overline{L}(e^{-h_{\eta}/2}u)\|^{2}$ and integrating this term by parts (compare \cite{Harrington-Raich-2011}, formula (12)), we make a somewhat ad hoc modification: we first drop the term (since it is nonnegative) and then add $3q\|\overline{L}u\|^{2}$ on both sides (the reason for the factor $3q$ will become clear below). Rearranging the inequality gives
\begin{multline}\label{prop-eq15}
         2\isum_{K}\int_{b\O}\partial\overline{\partial}(-h_{\eta})\big(\tilde{L}_{u}^{K}\wedge\overline{\tilde{L}_{u}^{K}}\big) \\
         + 2\text{Re} \isum_{K}\sum_{j,k}\int_{b\O}  c_{jk}(T+\frac{1}{2}\frac{\partial h_{\eta}}{\partial\nu}) + 3q\norm{\overline{L}u}^2 \\
          \lesssim \normwh{\dbarb (e^{-h_\eta/2}u)}^2 + \normwh{\dbarbstarwh (e^{-h_\eta/2}u)}^2 + \bigo{\norm{u}^2} + 3q\norm{\overline{L}u}^2.
\end{multline}
On the left hand side, we integrate $\|\overline{L}u\|^{2}$ by parts as usual. That is
\begin{equation}\label{}
    \norm{\overline{L}u}^2 = - \sum_j\isum_J \int_{b\O} (c_{jj} Tu_J)\overline{u_{J}} +  \norm{Lu}^2 + \bigo{\norm{{L}u}\norm{u}} + \bigo{\norm{\overline{L}u}\norm{u}}\;;
\end{equation}
this results in
\begin{multline}\label{prop-eq16}
         2\isum_{K}\int_{b\O}\partial\overline{\partial}(-h_{\eta})\big(\tilde{L}_{u}^{K}\wedge\overline{\tilde{L}_{u}^{K}}\big) + 2\text{Re} \isum_{K}\sum_{j,k}\int_{b\O}  c_{jk}(T+\frac{1}{2}\frac{\partial h_{\eta}}{\partial\nu})u_{jK}\overline{u_{kK}} \\
          - 3q\sum_j\isum_J \int_{b\O} (c_{jj} Tu_J)\overline{u_J} +  3q\norm{Lu}^2 + \bigo{\norm{{L}u}\norm{u}} + \bigo{\norm{\overline{L}u}\norm{u}} \\
          \lesssim \normwh{\dbarb (e^{-h_\eta/2}u)}^2 + \normwh{\dbarbstarwh (e^{-h_\eta/2}u)}^2 + \bigo{\norm{u}^2} + 3\norm{\overline{L}u}^2.
\end{multline}
Using $\|\overline{L}u\|^{2}\geq 0$, the Cauchy-Schwarz inequality, maximal estimates, and \eqref{basic} gives
\begin{multline}\label{prop-eq17}
        2\isum_{K}\int_{b\O}\partial\overline{\partial}(-h_{\eta})\big(\tilde{L}_{u}^{K}\wedge\overline{\tilde{L}_{u}^{K}}\big)  \\
         + 2\text{Re} \isum_{K}\sum_{j,k}\int_{b\O}  c_{jk}(T+\frac{1}{2}\frac{\partial h_{\eta}}{\partial\nu}) 
         u_{jK}\overline{u_{kK}} 
          - 3q\sum_j\isum_J \int_{b\O} (c_{jj} Tu_J)\overline{u_J} \\
          \lesssim \normwh{\dbarb (e^{-h_\eta/2}u)}^2 + \normwh{\dbarbstarwh (e^{-h_\eta/2}u)}^2 + \norm{\dbarb u}^2 + \norm{\dbarbstar u}^2.
\end{multline}
Since $T$ is purely imaginary, integration by parts shows that $\sum_j\sum_J' \int_{b\O} (c_{jj} Tu_J)\overline{u_J}$ is real modulo $\bigo{\norm{u}^2}$. Therefore taking real parts in \eqref{prop-eq17}, using \eqref{basic} again, together with $\sum_j\sum^{'}_{|J|=q} \int_{b\O} (c_{jj} Tu_J)\overline{u_J} = \frac{1}{q} \sum_l\sum_{j,k}\sum^{'}_{|K|=q-1} \int_{b\O} (c_{ll}\delta_{jk} Tu_{jK})\overline{u_{kK}}$ (the factor of $1/q$ arises because each $q$-tuple $J$ gives rise to $q$ of the tuples $jK$) results in
\begin{multline}\label{prop-eq20}
    2\isum_{K}\int_{b\O}\partial\overline{\partial}(-h_{\eta})\big(\tilde{L}_{u}^{K}\wedge\overline{\tilde{L}_{u}^{K}}\big)  \\
         + \text{Re} \isum_{K}\sum_{j,k}\int_{b\O} \left( \left (2c_{jk} - 3\delta_{jk}\text{Tr}(c)\right) T +\frac{\partial h_{\eta}}{\partial\nu}c_{jk}\right) u_{jK}\overline{u_{kK}} \\
         \lesssim \normwh{\dbarb (e^{-h_\eta/2}u)}^2 + \normwh{\dbarbstarwh (e^{-h_\eta/2}u)}^2 + \norm{\dbarb u}^2 + \norm{\dbarbstar u}^2\;,
\end{multline}
where $Tr(c)$ denotes the trace of the Levi form. Multiplying both sides of \eqref{prop-eq20} by $(1-\eta)$, then adding $M_\eta \sum^{'}_{K}\sum_{j,k}\int_{b\O} c_{jk}u_{jK}\overline{u_{kK}}$, and invoking \eqref{alphaest2} and \eqref{basic} leads to
\begin{multline}\label{prop-eq21}
         \sideset{}{'}\sum_{J}\int_{b\Omega}|\alpha_{\eta}(\tilde{L}_{u}^{J})|^{2}\\
          \lesssim (1-\eta)\bigg (\normwh{\dbarb (e^{-h_\eta/2}u)}^2 + \normwh{\dbarbstarwh (e^{-h_\eta/2}u)}^2 + \norm{\dbarb u}^2 + \norm{\dbarbstar u}^2 \\
          - \text{Re} \isum_{K}\sum_{j,k}\int_{b\O}  \left (2c_{jk} - 3\delta_{jk}\text{Tr}(c)\right) T u_{jK}\overline{u_{kK}} 
          + \tilde{M}_\eta \isum_{K}\sum_{j,k}\int_{b\O} c_{jk}u_{jK}\overline{u_{kK}} \bigg ),
\end{multline}
where $\tilde{M}_{\eta}$ is chosen big enough so that $\frac{M_{\eta}}{1-\eta}-\frac{\partial h_{\eta}}{\partial\nu}\leq \tilde{M}_{\eta}$. We now need an upper bound on the last two terms on the right hand side of \eqref{prop-eq21}, with $u$ replaced by $\pminus u$, equivalently, a lower bound on
\begin{multline}\label{prop-eq22}
\text{Re}\bigg( \isum_{K}\sum_{j,k}\int_{b\O}  \left (2c_{jk} - 3\delta_{jk}\text{Tr}(c)\right) T (\pminus u)_{jK}\overline{(\pminus u)_{kK}}
        \\  \qquad\qquad\qquad\qquad  - \tilde{M}_\eta \isum_{K}\sum_{j,k}\int_{b\O} c_{jk}(\pminus u)_{jK}\overline{(\pminus u)_{kK}} \bigg )
        \\  = \text{Re}\bigg( \isum_{K}\sum_{j,k}\int_{b\O}  \left (2c_{jk} - 3\delta_{jk}\text{Tr}(c)\right) (T+ \tilde{M}_\eta ) (\pminus u)_{jK}\overline{(\pminus u)_{kK}}
        \\ \qquad\qquad\qquad - 3\tilde{M}_\eta\isum_{K}\sum_{j,k}\int_{b\O}\left( c_{jk} - \delta_{jk}\text{Tr}(c) \right)(\pminus u)_{jK}\overline{(\pminus u)_{kK}}
        \bigg)
    \\ \ge \text{Re} \bigg( \isum_{K}\sum_{j,k}\int_{b\O}  \left (c_{jk} - 2\delta_{jk}\text{Tr}(c)\right) (T+ \tilde{M}_\eta) (\pminus u)_{jK}\overline{(\pminus u)_{kK}}
        \bigg) \\
    = \text{Re} \bigg( \isum_{K}\sum_{j,k}\int_{b\O}  \left (2\delta_{jk}\text{Tr}(c)-c_{jk}\right) (-T-\tilde{M}_\eta) (\pminus u)_{jK}\overline{(\pminus u)_{kK}}
        \bigg) \;.
\end{multline}
The inequality follows because the matrix $(c_{jk}-\delta_{jk}\text{Tr}(c))$ is negative semidefinite. The right hand side of \eqref{prop-eq22} is analogous to \eqref{prop-eq4}, with $c_{jk}$, $T-\tilde{M}_{\eta}$, and $\chi^{+,\eta}$ replaced by $(2\delta_{jk}\text{Tr}(c)-c_{jk})$, $-T-\tilde{M}_{\eta}$, and $\chi^{-,\eta}$, respectively. On the support of $\chi^{-,\eta}$, $-\tau - \tilde{M}_{\eta}\geq 1+2\tilde{M}_{\eta}\geq 1$. With this set up, the right hand side of \eqref{prop-eq22} can now be estimated in the same way as \eqref{prop-eq4} to obtain the analogue of \eqref{prop-eq12ab} for $\mathcal{P}^{-,\eta}u$:
\begin{multline}\label{prop-eq12abc}
\sideset{}{'}\sum_{J}|\alpha_{\eta}(\tilde{L}^{J}_{\mathcal{P}^{-,\eta}u})|^{2}\\
\lesssim (1-\eta)\left(\|\overline{\partial}_{b}(e^{-h_{\eta}/2}\mathcal{P}^{-,\eta}u)\|_{-h_{\eta}}^{2}+\|\overline{\partial}_{b,-h_{\eta}}^{*}(e^{-h_{\eta}/2}\mathcal{P}^{-,\eta}u)\|_{-h_{\eta}}^{2}+\|\overline{\partial}_{b}u\|^{2}+\|\overline{\partial}_{b}^{*}u\|^{2}\right) \\
+ C_{\eta}\|u\|_{-1}^{2}\;.\;\;\;\;\;\;\;\;\;
\end{multline}
$\;\;$To estimate the Re--term in the second line of \eqref{prop-eq3a} for $\mathcal{P}^{0,\eta}u$, we again have to control the dependence of our estimates on $\eta$. To this end, note that $\chi^{0,\eta} = \chi^{0} + \chi^{+}(1-\sigma(\tau-3\tilde{M}_{\eta}))+\chi^{-}(1-\sigma(-\tau-3\tilde{M}_{\eta}))$. $\chi^{0}$ is independent of $\eta$ and is supported in an elliptic region, allowing estimates that are independent of $\eta$. The support of $\chi^{+}(1-\sigma(\tau-3\tilde{M}_{\eta}))+\chi_{-}(1-\sigma(-\tau-3\tilde{M}_{\eta}))$ depends on $\eta$, but is compact, so that the corresponding operator is smoothing of infinite order (this is analogous to \eqref{2} -- \eqref{split}). Accordingly, we have $\|\mathcal{P}^{0,\eta}u\|_{1} \lesssim \|\overline{\partial}_{b}\mathcal{P}^{0}u\|+\|\overline{\partial}_{b}^{*}\mathcal{P}^{0}u\| + C_{\eta}\|u\|_{-1}$. Therefore
\begin{multline}\label{prop-eq-22c}
 \left|\int_{b\O} c_{jk} (T \pzero u_{jK}) \overline{\pzero u_{kK}}\,\right|\lesssim \|\pzero u\|_{1}\|\pzero u\|  \\
 \lesssim \big(\|\overline{\partial}_{b}\mathcal{P}^{0}u\|+\|\overline{\partial}_{b}^{*}u\mathcal{P}^{0}\|+C_{\eta}\|u\|_{-1}\big)\|u\| \\
 \lesssim \|\overline{\partial}_{b}u\|^{2}+\|\overline{\partial}_{b}^{*}u\|^{2}+C_{\eta}\|u\|_{-1}^{2}\;. 
\end{multline}
The last inequality holds because the commutators $[\overline{\partial}_{b},\mathcal{P}^{0}]$ and $[\overline{\partial}_{b}^{*},\mathcal{P}^{0}]$ are order zero and are independent of $\eta$. We also used \eqref{basic}. In addition,
\begin{multline}\label{prop-eq-22e}
\tilde{M}_{\eta}\left|\int_{b\O} c_{jk}\, \pzero u_{jK}\, \overline{\pzero u_{kK}}\,\right| \lesssim \tilde{M}_{\eta}\|\pzero u\|^{2}    \\
\lesssim \tilde{M}_{\eta}\left(\frac{1}{\tilde{M}_{\eta}}\|\pzero u\|_{1}+ C_{\eta}\|\pzero u\|_{-1}\right)\|\mathcal{P}^{0,\eta}u\|  \\
\lesssim \|\overline{\partial}_{b}u\|^{2}+\|\overline{\partial}_{b}^{*}u\|^{2}+C_{\eta}\|u\|_{-1}^{2}\;.
\end{multline}
We have used that $\|\pzero u\|_{-1}\leq C_{\eta}\|u\|_{-1}$ ($\pzero$ is of order zero uniformly in $\eta$).
Combining \eqref{prop-eq-22c} and \eqref{prop-eq-22e} now also establishes the estimate
\begin{multline}\label{prop-eq-22d}
       -(1-\eta)2\text{Re}\left( \isum_{K}\sum_{jk} \int_{b\O} c_{jk} \left( (T - \tilde{M}_\eta ) \pzero u_{jK}\right) \overline{\pzero u_{kK}} \right) \\
       \leq C(1-\eta)(\|\overline{\partial}_{b}u\|^{2}+\|\overline{\partial}_{b}^{*}u\|^{2}) + C_\eta\norm{u}^2_{-1}\;.
\end{multline}
With \eqref{prop-eq3a}, we thus have
\begin{multline}\label{prop-eq-22dd}
 \sideset{}{'}\sum_{J}|\alpha_{\eta}(\tilde{L}^{J}_{\mathcal{P}^{0,\eta}u})|^{2}\\
\lesssim (1-\eta)\left(\|\overline{\partial}_{b}(e^{-h_{\eta}/2}\mathcal{P}^{0,\eta}u)\|_{-h_{\eta}}^{2}+\|\overline{\partial}_{b,-h_{\eta}}^{*}(e^{-h_{\eta}/2}\mathcal{P}^{0,\eta}u)\|_{-h_{\eta}}^{2}+\|\overline{\partial}_{b}u\|^{2}+\|\overline{\partial}_{b}^{*}u\|^{2}\right) \\
+ C_{\eta}\|u\|_{-1}^{2}\;.\;\;\;\;\;\;\;\;\;
\end{multline}

\smallskip

We now deal with the `weighted' terms in the right hand sides of \eqref{prop-eq12ab}, \eqref{prop-eq12abc}, and \eqref{prop-eq-22dd}. The argument is the same in all three cases, we give it for \eqref{prop-eq12ab}. We closely follow \cite{Liu-Straube-2025}. In a special boundary chart, we have
\begin{multline}\label{wt-eq1}
        \dbarstar_{b,-h_\eta}(e^{-h_\eta/2}\mathcal{P}^{+,\eta}u) \\
        = e^{-h_{\eta}}\overline{\partial}_{b}^{*}(e^{h_{\eta}}e^{-h_{\eta}/2}\mathcal{P}^{+,\eta}u) = e^{-h_{\eta}}\overline{\partial}_{b}^{*}(e^{h_{\eta}/2}\mathcal{P}^{+,\eta}u)\;\;\;\;\;\;\;\;\;\;\;\;\;\;\;\;\;\;\;\;\;\;\;\;\;\;\;\;\;\;\\
        \;\;= e^{-h_\eta/2}\dbarbstar \mathcal{P}^{+,\eta}u + \frac{1}{2}\sum_j\isum_K e^{-h_\eta/2} (L_j h_\eta)(\mathcal{P}^{+,\eta}u_{jK})\overline{\o}_K + \bigo{|e^{-h_\eta/2}\mathcal{P}^{+,\eta}u|}\;,
\end{multline}
where the constant in the $\mathcal{O}$--term does not depend on $\eta$.
Taking $\normwh{\cdot}$-norm on both sides of \eqref{wt-eq1}, we get 
\begin{equation}\label{wt-eq2}
    \normwh{\dbarbstarwh(e^{-h_\eta/2}\mathcal{P}^{+,\eta}u)}^2 \lesssim \norm{\dbarbstar \mathcal{P}^{+,\eta}u}^2 + \big\|\sum_j\isum_K(L_j h_\eta)\mathcal{P}^{+,\eta}u_{jK}\big\|^2 + \norm{u}^2.
\end{equation}
Now 
\begin{multline}\label{wt-eq3}
        \Big\|{\sum_j\isum_K (L_j h_\eta)\mathcal{P}^{+,\eta}u_{jK}}\Big\|^2 \\
        = \Big\|{\sum_j\isum_K dh(L_{j})\mathcal{P}^{+,\eta}u_{jK}}\Big\|^2 = \Big\| \sum_j\isum_{K} \left( \alpha_\eta - \alpha^\rho \right)(L_{j})\mathcal{P}^{+,\eta}u_{jK} \Big\|^2 \\
         \lesssim  \isum_K \|\alpha_\eta\big(\sum_{j}(\mathcal{P}^{+,\eta}u)_{jK}L_{j}\big)\|^2 +  \sum_{j}\isum_K\| \alpha^\rho(L_{j})\mathcal{P}^{+,\eta}u_{jK}\|^2 \\
        \lesssim  \isum_K \|\alpha_\eta(\tilde{L}_{\mathcal{P}^{+,\eta}u}^K)\|^2 + \norm{u}^2.
\end{multline}
We have used that $\|\mathcal{P}^{+,\eta}u\|^{2}\leq \|u\|^{2}$. Estimates \eqref{wt-eq2} and \eqref{wt-eq3} combine to give
\begin{equation}\label{wt-eq3a}
    \normwh{\dbarbstarwh(e^{-h_\eta/2}\mathcal{P}^{+,\eta}u)}^2 \lesssim \norm{\dbarbstar \mathcal{P}^{+,\eta}u}^2 + \isum_K \norm{\alpha_\eta(\tilde{L}_{\mathcal{P}^{+,\eta}u}^K)}^2 + \norm{u}^2.
\end{equation}
Inserting \eqref{wt-eq3a} into \eqref{prop-eq12ab} gives
\begin{equation}\label{wt-eq4}
    \begin{split}
        \isum_{K}\int_{b\O} \abs{\alpha_\eta(\tilde{L}_{\mathcal{P}^{+,\eta}u}^K)}^2 &\lesssim (1-\eta)\Big( \normwh{\dbarb(e^{-h_\eta/2}\mathcal{P}^{+,\eta}u)}^2 + \norm{\dbarbstar \mathcal{P}^{+,\eta}u}^2 + \norm{u}^2
        \\ & \qquad + \isum_K\norm{\alpha_\eta(\tilde{L}_{\mathcal{P}^{+,\eta}u}^K)}^2 + \norm{\dbarb u}^2+\|\overline{\partial}_{b}^{*}u\|^{2}\Big) + C_\eta\norm{u}_{-1}^{2}.
    \end{split}
\end{equation}
For $\eta$ sufficiently close to $1$, we absorb $(1-\eta)\sum^{'}_K\norm{\alpha_\eta(\tilde{L}_{\mathcal{P}^{+,\eta}u}^K)}^2=(1-\eta)\sum_{K}'\int_{b\O} \abs{\alpha_\eta(\tilde{L}_{\mathcal{P}^{+,\eta}u}^K)}^2$ into the left hand side of \eqref{wt-eq4} and use $\norm{u}^2\lesssim \norm{\dbarb u}^2 + \norm{\dbarbstar u}^2$ to obtain
\begin{multline}\label{wt-eq5}
        \isum_K \int_{b\O} \abs{\alpha_\eta(\tilde{L}_{\mathcal{P}^{+,\eta}u}^K)}^2 \lesssim (1-\eta)\Big( \normwh{\dbarb(e^{-h_\eta/2}\mathcal{P}^{+,\eta}u)}^2 +\|\overline{\partial}_{b}^{*}\mathcal{P}^{+,\eta}u\|^{2} \\
       + \norm{\dbarb u}^2 + \norm{\dbarbstar u}^2 \Big) +  C_\eta\norm{u}_{-1}^{2}.
\end{multline}

\smallskip

Now we estimate $\normwh{\dbarb(e^{-h_\eta/2}\mathcal{P}^{+,\eta}u)}^2$. We have
\begin{multline}\label{wt-eq6}
   \| \dbarb(e^{-h_\eta/2}\mathcal{P}^{+,\eta}u)\|_{-h_{\eta}}^{2} \\
   \lesssim \|e^{-h_\eta/2}\dbarb \mathcal{P}^{+,\eta}u\|_{-h_{\eta}}^{2} + \|e^{-h_\eta/2}(\dbarb h_\eta)\wedge \mathcal{P}^{+,\eta}u\|_{-h_{\eta}}^{2} \\
\lesssim \norm{\dbarb \mathcal{P}^{+,\eta}u}^2 + \norm{(\dbarb h_\eta)\wedge \mathcal{P}^{+,\eta}u}^2.
\end{multline}
For the last term, note that because $h_{\eta}$ is real, $|\overline{L_{j}}h_{\eta}|=|L_{j}h_{\eta}|$, so that
\begin{multline}\label{wt-eq7}
    \norm{\dbarb h_\eta\wedge \mathcal{P}^{+,\eta}u}^2 \le \sum_j\isum_J \int_{b\O}  \abs{(\overline{L}_jh_\eta) \mathcal{P}^{+,\eta}u_J}^2 \\
    = \sum_j\isum_J \int_{b\O}  \abs{({L}_jh_\eta) \mathcal{P}^{+,\eta}u_J}^2
        \lesssim \sum_j\isum_J \int_{b\O}  \abs{\alpha_\eta(\mathcal{P}^{+,\eta}u_JL_j)}^2 + \norm{\mathcal{P}^{+,\eta}u}^2.
\end{multline}
The last inequality follows as in \eqref{wt-eq3}. For each pair $j,J$, let $J'\subset J$ be a $(q-1)$-tuple that does not contain $j$. Then $\mathcal{P}^{+,\eta}(u_{J}\overline{\omega_{j}}\wedge\overline{\omega_{J^{\prime}}})=(\mathcal{P}^{+,\eta}u_{J})\overline{\omega_{j}}\wedge\overline{\omega_{J^{\prime}}}$, and $\sum_{s}(\mathcal{P}^{+,\eta}u_{J}\overline{w_{j}}\wedge\overline{\omega_{J^{\prime}}})_{sJ^{\prime}}L_{s}=\mathcal{P}^{+,\eta}u_{J}L_{j}$ (the sum is only over $s\notin J^{\prime}$, which forces $s=j$ for a nonvanishing term). That is, $\mathcal{P}^{+,\eta}u_{J}L_{j}=\tilde{L}^{J^{\prime}}_{\mathcal{P}^{+,\eta}(u_{J}\overline{\omega_{j}}\wedge\overline{\omega_{J^{\prime}}})}$. (When $q=1$, $J^{\prime}$ is empty, and $(\mathcal{P}^{+,\eta}u_{J})L_{j}=\tilde{L}_{\mathcal{P}^{+,\eta}}u_{J}\overline{\omega_{j}}$.) We now apply \eqref{wt-eq5} with $u$ replaced by $u_{J}\overline{\omega_{j}}\wedge\overline{\omega_{J}}$. Then
\begin{multline}\label{wt-eq7a}
    \int_{b\O}  \abs{\alpha_\eta(\mathcal{P}^{+,\eta}u_JL_j)}^2 
    = \int_{b\O} \abs{\alpha_\eta\big(\tilde{L}^{J^{\prime}}_{\mathcal{P}^{+,\eta}(u_{J}\overline{\omega_{j}}\wedge\overline{\omega_{J^{\prime}}})}\big)}^2
        \leq \sum_{K}\int_{b\O} \abs{\alpha_\eta\big(\tilde{L}^{K}_{\mathcal{P}^{+,\eta}(u_{J}\overline{\omega_{j}}\wedge\overline{\omega_{J^{\prime}}})}\big)}^2 \\
         \lesssim (1-\eta)\Big( \normwh{\dbarb(e^{-h_\eta/2}\mathcal{P}^{+,\eta}u_J \bar{\o}_j\wedge\bar{\o}_{J'})}^2 + \norm{\dbarbstar (\mathcal{P}^{+,\eta}u_J \bar{\o}_j\wedge\bar{\o}_{J'})}^2 \\
         + \norm{\dbarb (u_J \bar{\o}_j\wedge\bar{\o}_{J'})}^2
         + \norm{\dbarbstar (u_J \bar{\o}_j\wedge\bar{\o}_{J'})}^2 \Big) +  C_\eta\norm{u_J \bar{\o}_j\wedge\bar{\o}_{J'}}_{-1}^{2}.
\end{multline}
The last inequality follows from \eqref{wt-eq5}. Next, we estimate the first term on the right hand side of \eqref{wt-eq7a}.
\begin{multline}\label{wt-eq7b}
        \normwh{\dbarb(e^{-h_\eta/2}\mathcal{P}^{+,\eta}u_J \bar{\o}_j\wedge\bar{\o}_{J'})}^2 
        \lesssim \sum_s \normwh{\overline{L}_s(e^{-h_\eta/2}\mathcal{P}^{+,\eta}u_J)}^2 + \normwh{e^{-h_\eta/2}\mathcal{P}^{+,\eta}u_J}^2 \\
        \lesssim \sum_s \left( \norm{(\overline{L}_s h_\eta)\mathcal{P}^{+,\eta}u_J}^2 + \norm{\overline{L}_s \mathcal{P}^{+,\eta}u_J}^2 \right) + \norm{\mathcal{P}^{+,\eta}u_J}^2 \\
        \lesssim \sum_s \norm{(\overline{L}_s h_\eta)\mathcal{P}^{+,\eta}u_J}^2 + \norm{\dbarb \mathcal{P}^{+,\eta}u}^2 + \norm{\dbarbstar \mathcal{P}^{+,\eta}u}^2\;.
\end{multline}
For the third inequality, we have used maximal estimates and \eqref{basic}. Starting with the second term on the first line of \eqref{wt-eq7} and  substituting \eqref{wt-eq7a} into its right hand side, we get
\begin{multline}\label{wt-eq-7bb}
  \sum_j\isum_J \int_{b\O}  \abs{(\overline{L}_jh_\eta)\mathcal{P}^{+,\eta} u_J}^2 \\
   \lesssim (1-\eta)\sum_{j}\sideset{}{'}\sum_{J}\Big( \normwh{\dbarb(e^{-h_\eta/2}\mathcal{P}^{+,\eta}u_J \bar{\o}_j\wedge\bar{\o}_{J'})}^2
   + \norm{\dbarbstar (\mathcal{P}^{+,\eta}u_J \bar{\o}_j\wedge\bar{\o}_{J'})}^2 \\
         + \norm{\dbarb (u_J \bar{\o}_j\wedge\bar{\o}_{J'})}^2
         + \norm{\dbarbstar (u_J \bar{\o}_j\wedge\bar{\o}_{J'})}^2 \Big) \\
         +  C_\eta\sum_{j}\sideset{}{'}\sum_{J}\norm{u_J \bar{\o}_j\wedge\bar{\o}_{J'}}_{-1}^{2} + \|\mathcal{P}^{+,\eta}u\|^{2}\;.
\end{multline}
Now we substitute \eqref{wt-eq7b} for the $\|\cdots\|_{-h_{\eta}}^{2}$--term in the second line of \eqref{wt-eq-7bb} and obtain
\begin{multline}\label{wt-eq7c}
    \sum_j\isum_J \int_{b\O}  \abs{(\overline{L}_jh_\eta)\mathcal{P}^{+,\eta} u_J}^2 \\
  \lesssim (1-\eta)\sum_{j}\sideset{}{'}\sum_{J}\Big( \sum_s \norm{(\overline{L}_s h_\eta)\mathcal{P}^{+,\eta}u_J}^2 + \norm{\dbarb \mathcal{P}^{+,\eta}u}^2 + \norm{\dbarbstar \mathcal{P}^{+,\eta}u}^2  \\
   \;\;\;\;\;\;\;\;\;\;\;\;\;\;\;\;\;\;\;\;\;\;\;+ \norm{\dbarbstar (\mathcal{P}^{+,\eta}u_J \bar{\o}_j\wedge\bar{\o}_{J'})}^2      + \norm{\dbarb (u_J \bar{\o}_j\wedge\bar{\o}_{J'})}^2
         + \norm{\dbarbstar (u_J \bar{\o}_j\wedge\bar{\o}_{J'})}^2 \Big) \\
         +  C_\eta\sum_{j}\sideset{}{'}\sum_{J}\norm{u_J \bar{\o}_j\wedge\bar{\o}_{J'}}_{-1}^{2} + \|\mathcal{P}^{+,\eta}u\|^{2}\;.
\end{multline}
Choosing $(1-\eta)$ small enough and absorbing $(1-\eta)(n-1)\sum_s\sum^{'}_{J} \norm{(\overline{L}_s h_\eta)u_J}^2$ into the left hand side gives
\begin{multline}\label{wt-eq7d}
\sum_j\sideset{}{'}\sum_{J}\norm{(\overline{L}_j h_\eta)\mathcal{P}^{+,\eta}u_J}^2 \\
\lesssim (1-\eta)\Big(\norm{\dbarb \mathcal{P}^{+,\eta}u}^2 + \norm{\dbarbstar \mathcal{P}^{+,\eta}u}^2+ \sum_{j}\sideset{}{'}\sum_{J}\Big(\norm{\dbarbstar (\mathcal{P}^{+,\eta}u_J \bar{\o}_j\wedge\bar{\o}_{J'})}^2  \\
   \;\;\;\;\;\;\;\;\;+ \norm{\dbarb (u_J \bar{\o}_j\wedge\bar{\o}_{J'})}^2
         + \norm{\dbarbstar (u_J \bar{\o}_j\wedge\bar{\o}_{J'})}^2 \Big)\Big) +\|u\|^{2}+C_{\eta}\|u\|_{-1}^{2}\;.
\end{multline}
We have used that $\|\mathcal{P}^{+,\eta}u\|^{2}\lesssim \|u\|^{2}$ and $\norm{u_J \bar{\o}_j\wedge\bar{\o}_{J'}}_{-1}^{2}\lesssim\|u\|_{-1}^{2}$. Starting with the first inequality in \eqref{wt-eq7} and using \eqref{wt-eq7d} gives us
\begin{multline}\label{wt-eq8}
        \norm{\dbarb h_\eta\wedge \mathcal{P}^{+,\eta}u}^2  \\
        \lesssim (1-\eta)\Big(\norm{\dbarb \mathcal{P}^{+,\eta}u}^2 + \norm{\dbarbstar \mathcal{P}^{+,\eta}u}^2+ \sum_{j}\sideset{}{'}\sum_{J}\big(\norm{\dbarbstar (\mathcal{P}^{+,\eta}u_J \bar{\o}_j\wedge\bar{\o}_{J'})}^2  \\
   \;\;\;\;\;\;\;\;\;+ \norm{\dbarb (u_J \bar{\o}_j\wedge\bar{\o}_{J'})}^2
         + \norm{\dbarbstar (u_J \bar{\o}_j\wedge\bar{\o}_{J'})}^2\big) \Big) +\|u\|^{2}+C_{\eta}\|u\|_{-1}^{2}\;.
\end{multline}
Combining \eqref{wt-eq5}, \eqref{wt-eq6}, and \eqref{wt-eq8}, and using $(1-\eta)^2 \le (1-\eta)$ and \eqref{basic}, we get
\begin{multline}\label{wt-eq9}
        \isum_K \int_{b\O} \abs{\alpha_\eta(\tilde{L}_{\mathcal{P}^{+,\eta}u}^K)}^2 \\
    \lesssim (1-\eta)\Big(\norm{\dbarb \mathcal{P}^{+,\eta}u}^2 + \norm{\dbarbstar \mathcal{P}^{+,\eta}u}^2+ \sum_{j}\sideset{}{'}\sum_{J}\Big(\norm{\dbarbstar (\mathcal{P}^{+,\eta}u_J \bar{\o}_j\wedge\bar{\o}_{J'})}^2  \\
   \;\;\;\;\;\;\;\;\;\;\;\;\;\;\;\;\;\;\;\;\;\;\;\;\;\;\;\;\;\;+ \norm{\dbarb (u_J \bar{\o}_j\wedge\bar{\o}_{J'})}^2
         + \norm{\dbarbstar (u_J \bar{\o}_j\wedge\bar{\o}_{J'})}^2\Big) +\|\overline{\partial}_{b}u\|^{2}+\|\overline{\partial}_{b}^{*}u\|^{2}\Big)
         \\
         +C_{\eta}\|u\|_{-1}^{2}\;.
\end{multline}
The various terms on the right hand side of \eqref{wt-eq9} are estimated as follows. First,
\begin{multline}\label{wt-eq10}
        \norm{\dbarb (u_J \bar{\o}_j\wedge\bar{\o}_{J'})}^2 + \norm{\dbarbstar (u_J \bar{\o}_j\wedge\bar{\o}_{J'})}^2 \\
        \lesssim \sum_s \left(
    \norm{\overline{L}_s u_J}^2 + \norm{{L}_s u_J}^2 \right) + \norm{u}^2
         \lesssim \norm{\dbarb u}^2 + \norm{\dbarbstar u}^2.
\end{multline}
We have again used maximal estimates and \eqref{basic}. In the terms in the first line of the right hand side of \eqref{wt-eq9} we commute $\mathcal{P}^{+,\eta}$ with the respective differential operators. Because $(\mathcal{P}^{+,\eta}-\mathcal{P}^{+})$ is infinitely smoothing ($\chi^{+,\eta}$ and $\chi^{+}$ differ by a compactly supported symbol), and $[\overline{\partial}_{b},\mathcal{P}^{+}]$ is of order zero (and does not depend on $\eta$), we can prove estimates that are uniform in $\eta$ (modulo a weak term):
\begin{multline}\label{wt-eq10a}
 [\overline{\partial}_{b}\mathcal{P}^{+,\eta}u\|^{2}\lesssim \|\overline{\partial}_{b}\mathcal{P}^{+}u\|^{2}+\|\overline{\partial}_{b}(\mathcal{P}^{+,\eta}-\mathcal{P}^{+})u\|^{2} \\
 \lesssim \|\mathcal{P}^{+}\overline{\partial}_{b}u\|^{2}+\|[\overline{\partial}_{b},\mathcal{P}^{+}]u\|^{2}+\|(\mathcal{P}^{+,\eta}-\mathcal{P}^{+})u\|_{1}^{2} \\
 \lesssim \|\overline{\partial}_{b}u\|^{2}+\|u\|^{2} +C_{\eta}\|u\|_{-1}^{2} 
 \lesssim \|\overline{\partial}_{b}u\|^{2}+\|\overline{\partial}_{b}^{*}u\|^{2} +C_{\eta}\|u\|_{-1}^{2}.
\end{multline}
$\|\overline{\partial}_{b}^{*}\mathcal{P}^{+,\eta}u\|^{2}$ and $ \norm{\dbarbstar (\mathcal{P}^{+,\eta}u_J \bar{\o}_j\wedge\bar{\o}_{J'})}^2$ are likewise estimated by the right hand side of \eqref{wt-eq10a} (for the latter, use also \eqref{wt-eq10}). Thus we finally arrive at
\begin{equation}\label{wt-eq11}
    \begin{split}
        \isum_K \int_{b\O} \abs{\alpha_\eta(\tilde{L}_{\mathcal{P}^{+,\eta}u}^K)}^2 &\lesssim (1-\eta)\Big(  \norm{\dbarb u}^2 + \norm{\dbarbstar u}^2 \Big) +  C_\eta\norm{u}_{-1}.
    \end{split}
\end{equation}

\smallskip

The estimates for the `weighted' terms on the right hand sides of \eqref{prop-eq12abc} and \eqref{prop-eq-22dd}, respectively, are verbatim the same, and the proof of Proposition \ref{main-proposition} is complete.
\end{proof}

\section{Proof of Theorem \ref{main-result}}\label{proof}

The prove Theorem \ref{main-result}, we only have to prove the estimates for $s=k\in\mathbb{N}$ (by interpolation). It is shown in \cite{BoasStraube91b} that the estimates $(3)$ for the complex Green operators are an easy consequence of estimates $(1)$ and $(2)$. Namely
\begin{equation}\label{5.36}
 G_{q} = Q_{q}R_{q-1}S_{q} + R_{q}Q_{q+1}(I-S_{q})\;,
\end{equation}
where $R_{q}$ and $Q_{q}$ are the canonical solution operators (at the appropriate form levels) for $\overline{\partial}_{b}$ and $\overline{\partial}_{b}^{*}$, respectively. In view of estimates (1), these operators are continuous in $W^{k}(b\Omega)$. Consequently, \eqref{5.36} displays $G_{q}$ as a composition of operators all of which are continuous in $W^{k}(b\Omega)$.

\smallskip

The proof of the estimates $(1)$ and $(2)$ relies on Proposition \ref{main-proposition}. For $\eta_{0}<\eta<1$, denote by $\rho_{\eta}$ a defining function for $\Omega$ so that $-(-\rho_{\eta})^{\eta}$ is plurisubharmonic (near $b\Omega$). As usual, our arguments will involve absorbing terms. This means that we need to know \emph{a priori} that the norms to be estimated are finite. For this reason, we will work on the boundaries $b\Omega_{\eta,\delta}$ of the subdomains $\Omega_{\eta,\delta}=\{z\in\Omega\,|\,\rho_{\eta}<-\delta\}$, $0<\delta<\delta_{\eta}$. These boundaries are level sets of $\rho_{\eta}$ and so are also level sets of $-(-\rho_{\eta})^{\eta}$, so that $\Omega_{\eta,\delta}$ admits a defining function that is plurisubharmonic at the boundary (while $\Omega$ need not). As a result, all the estimates in Theorem \ref{main-result} hold on $b\Omega_{\eta,\delta}$, by \cite{BoasStraube91b}, and the relevant norms will be finite. When $\Omega$ itself does not admit a plurisubharmonic defining function, the arguments in \cite{BoasStraube91b} do not give control over constants as $\delta\rightarrow 0$. However, we will show below that Proposition \ref{main-proposition} nevertheless implies that constants may be taken uniformly over approximating subdomains. More precisely, we will show that there is $\eta_{2}=\eta_{2}(k)$, $0<\eta_{2}<1$, such that for $\eta_{2}<\eta<1$, there is $\delta_{\eta}=\delta_{\eta}(k)>0$ so that the estimates in Theorem \ref{main-result} (with $s=k$) hold on $b\Omega_{\eta,\delta}$ uniformly for $0<\delta<\delta_{\eta}$. Once we have these uniform estimates, they will transfer to $b\Omega$ as in section 4 of \cite{BoasStraube91b}.

\smallskip

We first prove $(1)$ for $\min\{ q_0,n-1-q_0 \} \le q \le \max\{ q_0,n-1-q_0 \}$. Note that $T_{\eta}$ and $\alpha_{\eta}$ live in a neighborhood of $b\Omega=b\Omega_{\eta}$ and on $b\Omega_{\eta,\delta}$ agree with $T_{(\rho_{\eta}+\delta)}$ and $\alpha_{(\rho_{\eta}+\delta)}$, respectively. Constants used in the proof may vary form one occurrence to the next, but are independent of $\eta$, unless they come with a subscript $\eta$. (Since $k$ is fixed throughout the argument, we do not indicate the dependence on $k$.) The estimates below should be understood as `there is $\delta_{\eta}$ so that the estimate holds uniformly in $\delta$ for $\delta<\delta_{\eta}$'. In order not to clutter the notation, we do not use subscripts $(\eta,\delta)$ on the norms.

\smallskip

Let $\eta>\eta_{0}$. As in \cite{BoasStraube91b}, page 1578, it follows from \eqref{comptang} that
\begin{equation}\label{5.1}
    \norm{u}^2_k \leq C_\eta\left(\norm{T_{\eta}^ku}^2 + \norm{\dbarb u}^2_{k-1} + \norm{\dbarbstar u}^2_{k-1} + \norm{u}_{k-1}\norm{u}_k \right)\;.
\end{equation}
The last term in \eqref{5.1} can be estimated via the usual $s.c.-l.c.$ estimate and interpolation of Sobolev norms as follows:
\begin{equation}\label{5.2}
 \|u\|_{k-1}\|u\|_{k}\leq (s.c.)_{1}\|u\|_{k}^{2}+(l.c.)_{1}\|u\|_{k-1}^{2}\leq (s.c.)_{1}\|u\|_{k}^{2}+(s.c.)_{2}\|u\|_{k}^{2}+(l.c.)_{2}\|u\|^{2}\,,
\end{equation}
where the two small constants can be chosen as small as we wish. Choosing them so that $C_{\eta}((s.c.)_{1}+(s.c.)_{2})\leq 1/2$ lets us absorb the $\|u\|_{k}^{2}$--terms into the left hand side. Taking into account \eqref{basic} then yields 
\begin{equation}\label{5.3}
  \norm{u}^2_k \leq C_\eta\left(\norm{T_{\eta}^ku}^2 + \norm{\dbarb u}^2_{k-1} + \norm{\dbarbstar u}^2_{k-1}\right)\;.
\end{equation}

\smallskip

To estimate $\|T_{\eta}^{k}u\|^{2}$ we will need the commutator formulas \eqref{commutator-formula-dbarb} and \eqref{commutator-formula-dbarbstar}, and so have to work in special boundary charts. Denote by $\{\chi_{s}\}_{s=1}^{M}$ a partition of unity subordinate to a cover $\{U_{s}\}_{s=1}^{M}$ of a neighborhood of $b\Omega$ by special boundary charts (which also serve as charts for $b\Omega_{\eta,\delta}$ for $\delta<\delta_{\eta}$). Note that for a form $u$ supported in a special boundary chart, the difference between $T_{\eta}$ acting componentwise on the components of $u$ in this chart, and $T_{\eta}$ acting componentwise on the Euclidean components, respectively, is of order zero (the difference arises when $T_{\eta}$ acts on the transformation matrix). Thus
\begin{equation}\label{5.4}
 \|T_{\eta}^{k}u\|^{2} \lesssim \big\|\sum_{s=1}^{M}T_{\eta}^{k}(\chi_{s}u)\big\|^{2} + C_{\eta}\|u\|_{k-1}^{2}\;,
\end{equation}
where on the right hand side, $T_{\eta}$ acts coefficientwise on the coefficients in the chart associated with $\chi_{s}$. We now work on the first term on the right hand side of \eqref{5.4}; all computations are in the special boundary charts. In a slight abuse of notation, we refrain from adding an additional sub/super script $s$ to $T_{\eta}$. We have
\begin{multline}\label{5.5}
 \big\|\sum_{s}T_{\eta}^{k}(\chi_{s}u)\big\|^{2}\leq \big\|\sum_{s}\chi_{s}T_{\eta}^{k}u\big\|^{2}+C_{\eta}\|u\|_{k-1}^{2}\\
 \lesssim\big\|\overline{\partial}_{b}\sum_{s}\chi_{s}T_{\eta}^{k}u\big\|^{2}+\big\|\overline{\partial}_{b}^{*}\sum_{s}\chi_{s}T_{\eta}^{k}u\big\|^{2}+c_{\eta}\|u\|_{k-1}^{2}\;.
\end{multline}
The first sum on the right hand is dominated by
\begin{equation}\label{5.6}
 \big\|\sum_{s}(\overline{\partial}_{b}\chi_{s})\wedge T_{\eta}^{k}u\big\|^{2} + \big\|\sum_{s}\chi_{s}\overline{\partial}_{b}(T_{\eta}^{k}u)\big\|^{2}\;.
\end{equation}
The $T_{\eta}^{k}u$ in the first term depends on $s$ ($T_{\eta}$ acts in the chart associated with $\chi_{s}$), but because it differs form the global term $T_{\eta}^{k}u$ computed in Euclidean coordinates by a term of order $(k-1)$, we can still exploit the fact that $\sum_{s}\overline{\partial}_{b}\chi_{s} = 0$ to obtain
\begin{equation}\label{5.7}
\big\|\overline{\partial}_{b}\sum_{s}\chi_{s}T_{\eta}^{k}u\big\|^{2} \leq \big\|\sum_{s}\chi_{s}\overline{\partial}_{b}(T_{\eta}^{k}u)\big\|^{2}+C_{\eta}\|u\|_{k-1}^{2}\;.
\end{equation}
To estimate $\big\|\overline{\partial}_{b}^{*}\sum_{s}\chi_{s}T_{\eta}^{k}u\big\|^{2}$, consider a test form $\gamma$. We have
\begin{equation}\label{5.7a}
 \big(\gamma,\overline{\partial}_{b}^{*}\sum_{s}\chi_{s}T_{\eta}^{k}u\big)=\sum_{s}(\overline{\partial}_{b}\gamma,\chi_{s}T_{\eta}^{k}u) = \sum_{s}(\overline{\partial}_{b}(\chi_{s}\gamma),T_{\eta}^{k}u)-\sum_{s}\big(\overline{\partial}_{b}\chi_{s}\wedge\gamma,T_{\eta}^{k}u\big)\;.
\end{equation}
The first term on the right hand side equals $\big(\gamma,\sum_{s}\chi_{s}\overline{\partial}_{b}^{*}T_{\eta}^{k}u\big)$. In the second term, using again that $T_{\eta}^{k}u$ differs from a globally defined term by an operator of order $(k-1)$ and that $\sum_{s}\overline{\partial}_{b}\chi_{s}=0$ shows that this term is bounded by $C_{\eta}\|\gamma\|\|u\|_{k-1}$. Altogether, we obtain
$\|\overline{\partial}_{b}^{*}\sum_{s}\chi_{s}T_{\eta}^{k}u\|^{2}\lesssim \|\sum_{s}\chi_{s}\overline{\partial}_{b}^{*}T_{\eta}^{k}u\|^{2}+C_{\eta}\|u\|_{k-1}^{2}$. Inserting this estimate and \eqref{5.7} into \eqref{5.5} gives
\begin{multline}\label{5.8}
 \big\|\sum_{s}T_{\eta}^{k}(\chi_{s}u)\big\|^{2} \lesssim \big\|\sum_{s}\chi_{s}\overline{\partial}_{b}(T_{\eta}^{k}u)\big\|^{2}+\big\|\sum_{s}\chi_{s}\overline{\partial}_{b}^{*}(T_{\eta}^{k}u)\big\|^{2}+C_{\eta}\|u\|_{k-1}^{2} \\
 \lesssim\sum_{s}\big\|\chi_{s}\overline{\partial}_{b}(T_{\eta}^{k}u)\big\|^{2}+\sum_{s}\big\|\chi_{s}\overline{\partial}_{b}^{*}(T_{\eta}^{k}u)\big\|^{2}+C_{\eta}\|u\|_{k-1}^{2}\;.
\end{multline}
Now fix $s$. Commuting $T_{\eta}^{k}$ with $\overline{\partial}_{b}$ and $\overline{\partial}_{b}^{*}$, respectively, gives
\begin{multline}\label{5.9}
 \big\|\chi_{s}\overline{\partial}_{b}(T_{\eta}^{k}u)\big\|^{2}+\big\|\chi_{s}\overline{\partial}_{b}^{*}(T_{\eta}^{k}u)\big\|^{2} \\
 \lesssim \big\|\chi_{s}[\overline{\partial}_{b},T_{\eta}^{k}]u\big\|^{2}+\big\|\chi_{s}[\overline{\partial}_{b}^{*},T_{\eta}^{k}]u\big\|^{2}+\|\chi_{s}T_{\eta}^{k}\overline{\partial}_{b}u\|^{2}+\|\chi_{s}T_{\eta}^{k}\overline{\partial}_{b}^{*}u\|^{2}\;.
\end{multline}

\smallskip

Recall from section \ref{ICDA} the vector fields $L_{u}^{J}$, $|J|=q$, associated to a $(0,q)$--form, and the fact that although these vector fields are not invariant under a change form the global Euclidean frame to a special boundary frame, the quantity on the left hand side of \eqref{estimate-main-proposition-euclidean} in Proposition \ref{main-proposition} is, up to a constant factor. Further recall from \eqref{wt-eq7}--\eqref{wt-eq7a} that if $J^{\prime}\subset J$ is a $(q-1)$--tuple with $j\notin J^{\prime}$, then $\tilde{L}_{u_{J}\overline{\omega_{j}}\wedge\overline{\omega_{J^{\prime}}}}^{J^{\prime}}=u_{J}\overline{\omega_{j}}\wedge\overline{\omega_{J^{\prime}}}$. Finally, $|\alpha_{\eta}(\overline{L_{j}})|=|\alpha_{\eta}(L_{j})|$, because $\alpha_{\eta}$ is real. With these facts in mind, the commutator formula \eqref{commutator-formula-dbarb} gives
\begin{multline}\label{5.10}
 \big\|\chi_{s}[\overline{\partial}_{b},T_{\eta}^{k}]u\big\|^{2}\lesssim \sum_{j\notin J}\sideset{}{'}\sum_{J}\|\alpha_{\eta}(\chi_{s}T_{\eta}^{k}u_{J}L_{j})\|^{2}+C_{\eta}\big(\|\overline{\partial}_{b}u\|_{k-1}^{2}+\|\overline{\partial}_{b}^{*}u\|_{k-1}^{2}+\|u\|_{k-1}^{2}\big) \\
 = \sum_{j\notin J}\sum_{J}\int_{b\Omega_{\eta,\delta}}\big|\alpha_{\eta}(\tilde{L}^{J^{\prime}}_{\chi_{s}T_{\eta}^{k}u_{J}(\overline{\omega_{j}}\wedge\overline{\omega_{J^{\prime}}})})\big|^{2}+C_{\eta}\big(\|\overline{\partial}_{b}u\|_{k-1}^{2}+\|\overline{\partial}_{b}^{*}u\|_{k-1}^{2}+\|u\|_{k-1}^{2}\big) \;.
\end{multline}
The sums are only over $j\notin J$ because if $j\in J$, the term $\overline{\omega_{j}}\wedge\overline{\omega_{J}}$ in \eqref{commutator-formula-dbarb} vanishes. For a given $J$, $J^{\prime}\subset J$ is an arbitrary $(q-1)$--tuple (then automatically, $j\notin J^{\prime}$). The $C_{\eta}$-- terms arise from the $A_{h_{\eta}}$--term in \eqref{commutator-formula-dbarb}, upon using maximal estimates for the complex tangential derivative (commuting it, if necessary, so that it acts first) and the $B_{h_{\eta}}$--term. Note that the integral for the fixed $J^{\prime}$ is at most equal to the sum of these integrals over $|K|=(q-1)$, with $K$ replacing the superscript $J^{\prime}$ (but not the subscript). To that sum,we apply Proposition \ref{main-result}. That is 
\begin{multline}\label{5.11}
 \sum_{j\notin J}\sideset{}{'}\sum_{J}\int_{b\Omega_{\eta,\delta}}\big|\alpha_{\eta}(\tilde{L}^{J^{\prime}}_{\chi_{s}T_{\eta}^{k}u_{J}(\overline{\omega_{j}}\wedge\overline{\omega_{J^{\prime}}})})\big|^{2} \leq \sum_{j\notin J}\sideset{}{'}\sum_{J}\sideset{}{'}\sum_{|K|=q-1}\int_{b\Omega_{\eta,\delta}}\big|\alpha_{\eta}(\tilde{L}^{K}_{\chi_{s}T_{\eta}^{k}u_{J}(\overline{\omega_{j}}\wedge\overline{\omega_{J^{\prime}}})})\big|^{2} \\
 \lesssim (1-\eta)\sum_{j\notin J}\sideset{}{'}\sum_{J}\bigg(\|\overline{\partial}_{b}(\chi_{s}T_{\eta}^{k}u_{J}(\overline{\omega_{j}}\wedge\overline{\omega_{J^{\prime}}}))\|^{2}+\|\overline{\partial}_{b}^{*}(\chi_{s}T_{\eta}^{k}u_{J}(\overline{\omega_{j}}\wedge\overline{\omega_{J^{\prime}}}))\|^{2}\bigg) \\
 + C_{\eta}\sum_{j\notin J}\sideset{}{'}\sum_{J}\|\chi_{s}T_{\eta}^{k}u(\overline{\omega_{j}}\wedge\overline{\omega_{J^{\prime}}})\|_{-1}^{2} 
 \end{multline}
Computing $\overline{\partial}_{b}$ and $\overline{\partial}_{b}^{*}$ in the special boundary charts involves only complex tangential derivatives plus undifferentiated terms. To the complex tangential derivatives we apply once more the maximal estimates \eqref{maxest}, for the forms $\chi_{s}T_{\eta}^{k}u$ (coefficients of these forms are being differentiated). We arrive at
 \begin{multline}\label{5.12}
 \sum_{j\notin J}\sideset{}{'}\sum_{J}\int_{b\Omega_{\eta,\delta}}\big|\alpha_{\eta}(\tilde{L}^{J^{\prime}}_{\chi_{s}T_{\eta}^{k}u_{J}(\overline{\omega_{j}}\wedge\overline{\omega_{J^{\prime}}})})\big|^{2} \\
 \lesssim (1-\eta)\big(\|\overline{\partial}_{b}(\chi_{s}T_{\eta}^{k}u)\|^{2}+\|\overline{\partial}_{b}^{*}(\chi_{s}T_{\eta}^{k}u)\|^{2}+\|\chi_{s}T_{\eta}^{k}u\|^{2}\big)+C_{\eta}\|u\|_{k-1}^{2}  \\
 \lesssim (1-\eta)\big(\|\chi_{s}\overline{\partial}_{b}(T_{\eta}^{k}u)\|^{2}+\|\chi_{s}\overline{\partial}_{b}^{*}(T_{\eta}^{k}u)\|^{2}+\|\hat{\chi}_{s}T_{\eta}^{k}u\|^{2}\big) + C_{\eta}\|u\|_{k-1}^{2} \;,
 \end{multline}
 where $\hat{\chi}_{s}$, $s=1\dots M$, are smooth compactly supported in the chart associated with $\chi_{s}$, nonnegative, and identically one in a neighborhood of $\supp(\chi_{s})$. The last inequality follows because the commutators of $\chi_{s}$ with $\overline{\partial}_{b}$ and $\overline{\partial}_{b}^{*}$ are of order zero and independent of $\eta$. Inserting \eqref{5.12} into \eqref{5.10} gives
\begin{multline}\label{5.12a}
 \big\|\chi_{s}[\overline{\partial}_{b},T_{\eta}^{k}]u\big\|^{2}\lesssim (1-\eta)\big(\|\chi_{s}\overline{\partial}_{b}(T_{\eta}^{k}u)\|^{2}+\|\chi_{s}\overline{\partial}_{b}^{*}(T_{\eta}^{k}u)\|^{2}+\|\hat{\chi}_{s}T_{\eta}^{k}u\|^{2}\big) \\
 +C_{\eta}\big(\|\overline{\partial}_{b}u\|_{k-1}^{2}+\|\overline{\partial}_{b}^{*}u\|_{k-1}^{2}+\|u\|_{k-1}^{2}\big) \;.\;\;\;\;\;\;\;
\end{multline}

\smallskip
Next, we similarly address the commutator term with $\overline{\partial}_{b}^{*}$ on the right hand side of \eqref{5.9}. The argument is actually simpler; because  $\sum_{j}\chi_{s}(T_{\eta}^{k}u_{jS})L_{j}=\sum_{j}(\chi_{s}T_{\eta}^{k}u)_{jS}L_{j} = \tilde{L}^{S}_{\chi_{s}T_{\eta}^{k}u}$. Formula \eqref{commutator-formula-dbarbstar} and Proposition \ref{main-result} give
\begin{multline}\label{5.13}
 \|\chi_{s}[\overline{\partial}_{b}^{*},T_{\eta}^{k}]u\|^{2} \\
 \lesssim \sideset{}{'}\sum_{S}\big\|\sum_{j}\chi_{s}\alpha_{\eta}(L_{j})T_{\eta}^{k}u_{jS}\big\|^{2}+C_{\eta}\big(\|\overline{\partial}_{b}u\|_{k-1}^{2}+\|\overline{\partial}_{b}^{*}u\|_{k-1}^{2}+\|u\|_{k-1}^{2}\big) \\
 = \sideset{}{'}\sum_{S}\|\alpha_{\eta}(\tilde{L}^{S}_{\chi_{s}T_{\eta}^{k}u})\|^{2}+C_{\eta}\big(\|\overline{\partial}_{b}u\|_{k-1}^{2}+\|\overline{\partial}_{b}^{*}u\|_{k-1}^{2}+\|u\|_{k-1}^{2}\big) \\
 \lesssim (1-\eta)\big(\|\overline{\partial}_{b}(\chi_{s}T_{\eta}^{k}u)\|^{2}+\|\overline{\partial}_{b}^{*}(\chi_{s}T_{\eta}^{k}u)\|^{2}\big) \;\;\;\;\;\;\;\;\;\;\;\;\;\\
 \;\;\;\;\;\;\;\;\;\;\;\;\;\;\;\;\;\;\;\;\;\;\;\;\;\;\;\;\;\;\;\;\;\;\;\;\;\;\;+C_{\eta}\big(\|\overline{\partial}_{b}u\|_{k-1}^{2}+\|\overline{\partial}_{b}^{*}u\|_{k-1}^{2}+\|u\|_{k-1}^{2}\big) \\
 \;\;\;\;\;\;\;\;\;\lesssim (1-\eta)\big(\|\chi_{s}\overline{\partial}_{b}(T_{\eta}^{k}u)\|^{2}+\|\chi_{s}\overline{\partial}_{b}^{*}(T_{\eta}^{k}u)\|^{2}+\|\hat{\chi}_{s}T_{\eta}^{k}u\|^{2}\big) \\
 + C_{\eta}\big(\|\overline{\partial}_{b}u\|_{k-1}^{2}+\|\overline{\partial}_{b}^{*}u\|_{k-1}^{2}+\|u\|_{k-1}^{2}\big) \;.
\end{multline}
The last inequality is as in \eqref{5.12}. Now we require $\eta$ to be close enough to one so that when we commute $\overline{\partial}_{b}$ and $\overline{\partial}_{b}^{*}$ with $T_{\eta}^{k}$ on the right hand sides of \eqref{5.12a} and \eqref{5.13}, and add the two estimates, $\big\|\chi_{s}[\overline{\partial}_{b},T_{\eta}^{k}]u\big\|^{2}+\|\chi_{s}[\overline{\partial}_{b}^{*},T_{\eta}^{k}]u\|^{2}$ can be absorbed. The conclusion is 
\begin{multline}\label{5.14}
 \big\|\chi_{s}[\overline{\partial}_{b},T_{\eta}^{k}]u\big\|^{2}+\|\chi_{s}[\overline{\partial}_{b}^{*},T_{\eta}^{k}]u\|^{2} \\
 \lesssim (1-\eta)\big(\|\hat{\chi}_{s}T_{\eta}^{k}u\|^{2} 
 + \|\chi_{s}T_{\eta}^{k}\overline{\partial}_{b}u\|^{2}+\|\chi_{s}T_{\eta}^{k}\overline{\partial}_{b}^{*}u\|^{2}\big) \\
\;\;\;\;\;\;\;\;\;\;\;\;\;\;\;\;\;\;\;\;\;\;\;\;\;\;\;\;\;\;\;\; + C_{\eta}\big(\|\overline{\partial}_{b}u\|_{k-1}^{2}+\|\overline{\partial}_{b}^{*}u\|_{k-1}^{2}+\|u\|_{k-1}^{2}\big)  \;.
\end{multline}
Now we take \eqref{5.14} and insert it into the right hand side of \eqref{5.9}. Combining the resulting estimate with \eqref{5.8} and \eqref{5.4} gives
\begin{multline}\label{5.15}
 \|T_{\eta}^{k}u\|^{2} \lesssim (1-\eta)\sum_{s}\|\hat{\chi}_{s}T_{\eta}^{k}u\|^{2}+ \sum_{s}\left(\|\chi_{s}T_{\eta}^{k}\overline{\partial}_{b}u\|^{2}+\|\chi_{s}T_{\eta}^{k}\overline{\partial}_{b}^{*}u\|^{2}\right) \\
 +C_{\eta}\big(\|\overline{\partial}_{b}u\|_{k-1}^{2}+\|\overline{\partial}_{b}^{*}u\|_{k-1}^{2}+\|u\|_{k-1}^{2}\big) \;,
\end{multline}
where on the left hand side $T_{\eta}$ acts on the Euclidean components of $u$ (as in \eqref{5.3} and on the left hand side of \eqref{5.4}), while on the right hand side it acts on the components of $u$ in the special boundary chart associated with $s$. Because the difference is of order $(k-1)$, switching $T_{\eta}$ back to acting on the Euclidean components makes an error that is dominated by the second line in \eqref{5.15}. That is, \eqref{5.15} holds with $T_{\eta}$ acting in Euclidean coordinates. Then the first sum on the right hand side is dominated by $M\|T_{\eta}^{k}u\|^{2}$. Again, for $\eta$ close enough to one, it can be absorbed.The result is 
\begin{equation}\label{5.16}
\|T_{\eta}^{k}u\|^{2}\lesssim \|T_{\eta}^{k}\overline{\partial}_{b}u\|^{2} + \|T_{\eta}^{k}\overline{\partial}_{b}^{*}u\|^{2} 
+C_{\eta}\big(\|\overline{\partial}_{b}u\|_{k-1}^{2}+\|\overline{\partial}_{b}^{*}u\|_{k-1}^{2}+\|u\|_{k-1}^{2}\big) \;.
\end{equation}
We conclude form \eqref{5.16} and \eqref{5.3}:
\begin{equation}\label{5.15a}
 \|u\|_{k}^{2}
 \leq C_{\eta}\left(\|\overline{\partial}_{b}u\|_{k}^{2}+\|\overline{\partial}_{b}^{*}u\|_{k}^{2} + \|u\|_{k-1}^{2}\right)\;.
\end{equation}
This estimate implies the desired conclusion: there is $\eta_{1}$ so that for $\eta>\eta_{1}$, there are constants $C_{\eta}$ such that
\begin{equation}\label{5.17}
 \|u\|_{k}^{2}\leq C_{\eta}\big(\|\overline{\partial}_{b}u\|_{k}^{2} + \|\overline{\partial}_{b}^{*}u\|_{k}^{2}\big)\;,\; u\in C^{\infty}_{(0,q)}(b\Omega)
\end{equation}
(the term $\|u\|_{k-1}^{2}$ is handled as in \eqref{5.2}). Moreover, there are $\delta_{\eta}>0$ such that \eqref{5.17} holds uniformly on the level surfaces $\{\rho_{\eta}=-\delta\}$ for $0\leq \delta<\delta_{\eta}$. With some care, one can check that the above arguments make sense if $u$ is only assumed in $W^{k}_{(0,q)}(b\Omega)$; alternatively, Friedrich's Lemma (\cite{Treves75}, Lemma 25.4) give this conclusion as well. If $\delta>0$, we can say more: as pointed out at the beginning of this section, the estimates in Theorem \ref{main-result} hold, and we may conclude that if $\overline{\partial}_{b}u$ and $\overline{\partial}_{b}^{*}u$ are in $W^{k}(b\Omega_{\eta,\delta})$, then $u$, initially only assumed in $L^{2}_{(0,q)}(b\Omega_{\eta,\delta})$, is indeed in $W^{k}_{(0,q)}(b\Omega_{\eta,\delta})$. Consequently, when $0<\delta<\delta_{\eta}$, \eqref{5.17} is a genuine estimate:
\begin{equation}\label{5.18}
 \|u\|_{k}^{2}\leq C_{\eta}\big(\|\overline{\partial}_{b}u\|_{k}^{2} + \|\overline{\partial}_{b}^{*}u\|_{k}^{2}\big) \;,
\end{equation}
uniformly on $b\Omega_{\eta,\delta}$ for $0<\delta<\delta_{\eta}$. 

\smallskip

Estimate \eqref{5.17} is for $\min\{ q_0,n-1-q_0 \} \le q \le \max\{ q_0,n-1-q_0 \}$. We now address the case $q=\min\{ q_0,n-1-q_0 \}-1$, using an idea from \cite{BoasStraube91b}. A separate argument is needed because we may not have the eigenvalues condition at this level (or this level may be zero). We keep the convention on constants changing from one occurrence to the next. Let $u\in C^{\infty}_{(0,q)}(b\Omega_{\eta,\delta})$, where $\eta$ and $\delta>0$ are from \eqref{5.18}, and $u\perp \ker(\overline{\partial}_{b})$. Then $u=\overline{\partial}_{b}^{*}v$ for $v\in C^{\infty}_{(0,q+1)}(b\Omega_{\eta,\delta})\cap \ker(\overline{\partial}_{b})$ (Theorem \ref{main-result} holds on $b\Omega_{\eta,\delta}$). For $\eta>\eta_{1}$, the estimates from the previous part hold for $(0,q+1)$--forms. In particular, in view of \eqref{5.18},
\begin{equation}\label{5.19}
 \|v\|_{k}^{2} \leq C_{\eta}\|\overline{\partial}_{b}^{*}v\|_{k}^{2} = C_{\eta} \|u\|_{k}^{2}\;.
\end{equation}
The estimates \eqref{5.1} and \eqref{5.3} still hold, and we again only have to estimate $\|T_{\eta}^{k}u\|^{2}$. We have
\begin{equation}\label{5.18a}
\|T_{\eta}^{k}u\|^{2} = \sum_{s}\big(\chi_{s}T_{\eta}^{k}u,T_{\eta}^{k}u\big) 
=\sum_{s}\big(T_{\eta}^{k}(\chi_{s}u),T_{\eta}^{k}u\big)+\mathcal{O}_{\eta}(\|u\|_{k-1}\|u\|_{k})\;.
\end{equation}
Because we will have to use estimate \eqref{5.14}, which is for $T_{\eta}$ acting in special boundary charts, we switch to that setup. If on the right hand side of \eqref{5.18a} we let $T_{\eta}$ act in special boundary charts, we make  an error that is $\mathcal{O}_{\eta}(\|u\|_{k-1}\|u\|_{k})$, and so is benign. Temporarily fixing $s$, and letting $T_{\eta}$ act in the chart associated with $\chi_{s}$, we write
\begin{equation}\label{5.18b}
 \big(T_{\eta}^{k}(\chi_{s}u),T_{\eta}^{k}u\big)= \big(\chi_{s}u,T_{\eta}^{2k}u\big)+\mathcal{O}_{\eta}(\|u\|_{k-1}\|u\|_{k}) \;.
 \end{equation}
 The error terms (or divergence terms) in the $k$--fold integrations by parts are of the form $\big(g_{m}T_{\eta}^{k-m}(\chi_{s}u),T_{\eta}^{m-1}T_{\eta}^{k}u\big)$, for some smooth function $g_{m}$. In view of the duality between $(W^{m-1}_{0})_{(0,q)}(U_{s}\cap b\Omega_{\eta,\delta})$ and $W^{-m+1}_{(0,q)}(U_{s}\cap b\Omega_{\eta,\delta})$ (via coefficientwise pairing), these terms are dominated by $\|g_{m}T_{\eta}^{k-m}(\chi_{s}u)\|_{W^{m-1}_{(0,q)}(U_{s}\cap b\Omega_{\eta,\delta})}\|T_{\eta}^{m-1}T_{\eta}^{k}u\|_{W^{-m+1}_{(0,q)}(U_{s}\cap b\Omega_{\eta,\delta})} \leq C_{\eta}\|u\|_{k-1}\|T_{\eta}^{k}u\|$ $\leq C_{\eta}\|u\|_{k-1}\|u\|_{k}$. The norms in the first $C_{\eta}$--term are over $U_{s}\cap b\Omega_{\eta,\delta}$; in the second, the norms are global. The main term on the right hand side of \eqref{5.18b} equals
\begin{multline}\label{5.18c}
 \big(\chi_{s}u,T_{\eta}^{2k}\overline{\partial}_{b}^{*}v\big)=
\big(\chi_{s}u,\overline{\partial}_{b}^{*}T_{\eta}^{2k}v\big)-\big(\chi_{s}u,[\overline{\partial}_{b}^{*},T_{\eta}^{2k}]v\big) \\
=\big(\overline{\partial}_{b}(\chi_{s}u),T_{\eta}^{2k}v\big) -\big(\chi_{s}u,[\overline{\partial}_{b}^{*},T_{\eta}^{2k}]v\big) \\
=\big(\overline{\partial}_{b}\chi_{s}\wedge u, T_{\eta}^{2k}v\big)+\big(\chi_{s}\overline{\partial}_{b}u,T_{\eta}^{2k}v\big)-\big(\chi_{s}u,[\overline{\partial}_{b}^{*},T_{\eta}^{2k}]v\big) \;.
 \end{multline}
Starting with \eqref{5.18a}, we obtain
 \begin{multline}\label{5.18d}
  \|T_{\eta}^{k}u\|^{2}\lesssim \left|\sum_{s}\left(\big(\overline{\partial}_{b}\chi_{s}\wedge u, T_{\eta}^{2k}v\big)+\big(\chi_{s}\overline{\partial}_{b}u,T_{\eta}^{2k}v\big)-\big(\chi_{s}u,[\overline{\partial}_{b}^{*},T_{\eta}^{2k}]v\big)\right)\right| \\
  +C_{\eta}\|u\|_{k-1}\|u\|_{k}
 \end{multline}
Because the difference between between $T_{\eta}^{2k}$ acting in a special boundary chart to the globally defined term where $T_{\eta}^{2k}$ acts in Euclidean coordinates is of order $(2k-1)$, and $\sum_{s}\overline{\partial}\chi_{s}=0$, the contribution to the right hand side of \eqref{5.18a} that originates with the first term on the right hand side of \eqref{5.18d} is dominated by $C_{\eta}\|\overline{\partial}_{b}\chi_{s}\wedge u\|_{k-1}\|D^{2k-1}(\hat{\chi}_{s}v)\|_{-k+1}\leq C_{\eta}\|u\|_{k-1}\|v\|_{k}$ ($D^{2k-1}$ is some differential operator of order $(2k-1)$; we have again used duality as above). The contribution from the middle term on the right hand side of the first line in \eqref{5.18d} is estimated by (again via a Sobolev space pairing on $U_{s}\cap b\Omega_{\eta,\delta}$) 
\begin{equation}\label{5.18e}
\left|\sum_{s}\big(\chi_{s}\overline{\partial}_{b}u,T_{\eta}^{2k}v\big)\right|\lesssim
\sum_{s}\|\chi_{s}\overline{\partial}_{b}u\|_{k}\|T_{\eta}^{2k}v\|_{-k} \leq C_{\eta}\|\overline{\partial}_{b}u\|_{k}\|v\|_{k} \;;
\end{equation}
again, the norms are first on $U_{s}\cap b\Omega_{\eta,\delta}$, and then on all of $b\Omega_{\eta,\delta}$. For the last term on the right hand side of \eqref{5.18d}, note that
\begin{equation}\label{5.22}
[\overline{\partial}_{b}^{*},T_{\eta}^{2k}]=[\overline{\partial}_{b}^{*},T_{\eta}^{k}]T_{\eta}^{k}+T_{\eta}^{k}[\overline{\partial}_{b}^{*},T_{\eta}^{k}] = 2T_{\eta}^{k}[\overline{\partial}_{b}^{*},T_{\eta}^{k}]-\big[[\overline{\partial}_{b}^{*},T_{\eta}^{k}],T_{\eta}^{k}\big] \;.
\end{equation}
The last term here is of order $(2k-1)$. Therefore,
\begin{multline}\label{5.23}
  \big|\big(\chi_{s}u,[\overline{\partial}_{b}^{*},T_{\eta}^{2k}]v\big)\big| \lesssim \big|\big(\chi_{s}u,T_{\eta}^{k}[\overline{\partial}_{b}^{*},T_{\eta}^{k}]v\big)\big| + C_{\eta}\|u\|_{k}\|v\|_{k-1} \\
   \lesssim \big|\big(T_{\eta}^{k}\chi_{s}u,[\overline{\partial}_{b}^{*},T_{\eta}^{k}]v\big)\big|+C_{\eta}\big(\|u\|_{k-1}\|[\overline{\partial}_{b}^{*},T_{\eta}^{k}]v\|+\|u\|_{k}\|v\|_{k-1}\big) \\
   \lesssim \big|\big(T_{\eta}^{k}u,\chi_{s}[\overline{\partial}_{b}^{*},T_{\eta}^{k}]v\big)\big|+C_{\eta}\big(\|u\|_{k-1}\|v\|_{k}+\|u\|_{k}\|v\|_{k-1}\big) \;.
\end{multline}
The integration by parts is handled as in \eqref{5.18b}. To estimate the main  term on the right hand side, we use \eqref{5.14} (with $u$ replaced by $v$):  
\begin{equation}\label{5.24}
\big\|\chi_{s}[\overline{\partial}_{b}^{*},T_{\eta}^{k}]v\big\|^{2}  
\lesssim (1-\eta)\big(\|\chi_{s}T_{\eta}^{k}u\|^{2}+ \|\hat{\chi}_{s}T_{\eta}^{k}v\|^{2}\big)
+C_{\eta}\big(\|u\|_{k-1}^{2}+\|v\|_{k-1}^{2}\big) \;.
\end{equation}
We have used that $\overline{\partial}_{b}v=0$ and $\overline{\partial}_{b}^{*}v=u$. Therefore, the main term on the right hand side of \eqref{5.23} can be estimated as follows:
\begin{multline}\label{5.24a}
 \big|\big(T_{\eta}^{k}u,\chi_{s}[\overline{\partial}_{b}^{*},T_{\eta}^{k}]v\big)\big| \leq (1-\eta)^{1/2}\|\hat{\chi}_{s}T_{\eta}^{k}u\|^{2}+(1-\eta)^{-1/2}\big\|\chi_{s}[\overline{\partial}_{b}^{*},T_{\eta}^{k}]v\big\|^{2} \\
 \lesssim (1-\eta)^{1/2}\big(\|\hat{\chi}_{s}T_{\eta}^{k}u\|^{2}+ \|\hat{\chi}_{s}T_{\eta}^{k}v\|^{2}\big)+C_{\eta}\big(\|u\|_{k-1}^{2}+\|v\|_{k-1}^{2}\big)
 \end{multline}
(note that $\hat{\chi}_{s}$ dominates $\chi_{s}$). Starting with \eqref{5.18d} and summing the estimates for the various terms on the right hand side results in 
\begin{multline}\label{5.24b}
\|T_{\eta}^{k}u\|^{2}\lesssim (1-\eta)^{1/2}\sum_{s}\big(\|\hat{\chi}_{s}T_{\eta}^{k}u\|^{2}+\|\hat{\chi}_{s}T_{\eta}^{k}v\|^{2}\big)  \\
+C_{\eta}\big(\|\overline{\partial}_{b}u\|_{k}\|u\|_{k}+\|u\|_{k-1}\|u\|_{k}\big) \;;
\end{multline}
we have used \eqref{5.19} for $\|v\|_{k}^{2}$ and $\|v\|_{k-1}^{2}$, and $\|u\|_{k-1}^{2}\leq \|u\|_{k}\|u\|_{k-1}$. On the left hand side of \eqref{5.24b}, $T_{\eta}$ acts on Euclidean components, whereas on the right hand side of the first line, it acts on the components in the chart associated with $\chi_{s}$.  However, by what is now a familiar observation, when we let it act on Euclidean components also on the right hand side, we make an error that is covered by the last error term in the second line. Accordingly, \eqref{5.24b} also holds with $T_{\eta}$ acting on Euclidean components everywhere. Denote by $\tilde{C}_{\eta}$ the constant from \eqref{5.3} for the current value of $\eta$. The error terms satisfy
\begin{multline}\label{5.24bb}
 C_{\eta}\big(\|\overline{\partial}_{b}u\|_{k}\|u\|_{k}+\|u\|_{k-1}\|u\|_{k}\big)\leq \frac{(1-\eta)^{1/2}}{\tilde{C}_{\eta}}\|u\|_{k}^{2}+C_{\eta}\big(\|\overline{\partial}_{b}u\|_{k}^{2}+\|u\|_{k-1}^{2}\big) \\
 \lesssim \frac{(1-\eta)^{1/2}}{\tilde{C}_{\eta}}\|u\|_{k}^{2}+C_{\eta}\big(\|\overline{\partial}_{b}u\|_{k}^{2}+\|u\|^{2}\big)\lesssim \frac{(1-\eta)^{1/2}}{\tilde{C}_{\eta}}\|u\|_{k}^{2}+C_{\eta}\|\overline{\partial}_{b}u\|_{k}^{2}\;.
 \end{multline}
The reader is reminded that constants (in this case $C_{\eta}$) are allowed to change their values form one occurrence to the next. For the first inequality, we have used the s.c.--l.c. estimate, for the second we have interpolated $\|u\|_{k-1}^{2}\leq((1-\eta)^{1/2}/\tilde{C}_{\eta})\|u\|_{k}^{2} + C_{\eta}\|u\|^{2}$, and for the third the estimate $\|u\|^{2}\lesssim \|\overline{\partial}_{b}u\|^{2}\leq \|\overline{\partial}_{b}u\|_{k}^{2}$ ($u\perp\ker(\overline{\partial}_{b})$ and \eqref{basic}). With \eqref{5.24bb}, \eqref{5.24b} becomes
\begin{equation}\label{5.24c}
\|T_{\eta}^{k}u\|^{2}\lesssim (1-\eta)^{1/2}\big(\|T_{\eta}^{k}u\|^{2}+\|T_{\eta}^{k}v\|^{2}+(1/\tilde{C_{\eta}})\|u\|_{k}^{2}\big)  
+C_{\eta}\|\overline{\partial}_{b}u\|_{k}^{2} \;.
\end{equation}  
From \eqref{5.16}, we have
\begin{equation}\label{5.26}
 \|T_{\eta}^{k}v\|^{2}\lesssim \|T_{\eta}^{k}u\|^{2}+C_{\eta}\|u\|_{k-1}^{2} \;,
\end{equation}
again because $\overline{\partial}_{b}v=0$, $\overline{\partial}_{b}^{*}v=u$, and \eqref{5.19}. Inserting this estimate into \eqref{5.24c}, restricting $\eta$ close enough to one, and absorbing $(1-\eta)^{1/2}\|T_{\eta}^{k}u\|^{2}$ yields
\begin{multline}\label{5.26a}
 \|T_{\eta}^{k}u\|^{2}\lesssim (1-\eta)^{1/2}(1/\tilde{C_{\eta}})\|u\|_{k}^{2}+C_{\eta}\big(\|\overline{\partial}_{b}u\|_{k}^{2}+\|u\|_{k-1}^{2}\big) \\
 \lesssim (1-\eta)^{1/2}(1/\tilde{C_{\eta}})\|u\|_{k}^{2}+C_{\eta}\|\overline{\partial}_{b}u\|_{k}^{2} \;;
\end{multline}
where we have used the same argument as in \eqref{5.24bb} for the $\|u\|_{k-1}^{2}$--term. Inserting \eqref{5.26a} into \eqref{5.3} and absorbing the $\|u\|_{k}^{2}$ term gives the desired estimate
\begin{equation}\label{5.31}
 \|u\|_{k}^{2} \lesssim C_{\eta}\|\overline{\partial}_{b}u\|_{k}^{2} \;;\;u\in C^{\infty}_{(0,q)}(b\Omega_{\eta,\delta})\;,\;u\perp\ker(\overline{\partial}_{b})\;.
\end{equation}
If $u$ is only assumed in $L^{2}_{(0,q)}(b\Omega_{\eta,\delta})$, but $\overline{\partial}_{b}u \in W^{k}_{(0,q+1)}(b\Omega_{\eta,\delta})$ and $u\perp\ker(\overline{\partial}_{b})$, then $u\in W^{k}_{(0,q)}(b\Omega_{\eta,\delta})$ (Theorem 1 holds on $b\Omega_{\eta,\delta}$, we just don't control the constants). Using that the pairings $(f,g)$ are well defined for $f\in W^{t}_{0}$ and $g\in W^{-t}$, it is now not hard to check that the arguments above hold when $u$ is only assumed in $W^{k}_{(0,q)}(b\Omega_{\eta,\delta})$. Therefore,there is $\eta_{2}$,
$\eta_{1}<\eta_{2}$ such that for $\eta_{2}<\eta$, there is $\delta_{\eta}>0$
so that we have the (genuine) estimate
\begin{equation}\label{5.32}
\|u\|_{k}^{2}\lesssim C_{\eta}\|\overline{\partial}_{b}u\|_{k}^{2}\;,\;u\perp\ker(\overline{\partial}_{b})\;,
\end{equation}
uniformly on $b\Omega_{\eta,\delta}$ for $\eta_{2}<\eta$ and $0<\delta<\delta_{\eta}$.

\smallskip

The case $q=\max\{q_{0}, n-1-q_{0}\}+1$ is handled analogously, with the commutators $[\overline{\partial}_{b}^{*},T_{\eta}^{2k}]$ and $[\overline{\partial}_{b}^{*},T_{\eta}^{k}]$ replaced by $[\overline{\partial}_{b},T_{\eta}^{2k}]$ and $[\overline{\partial}_{b},T_{\eta}^{k}]$, respectively. It is also interesting to use that estimates hold at symmetric form levels. We only sketch the argument. Consider the isometries $A_{p,q}$ from  Proposition 1 in \cite{Biard-Straube-2019}, in particular $A_{n,q}: L^{2}_{(n,q)}(b\Omega)\rightarrow L^{2}_{(0,n-1-q)}(b\Omega)$. These operators also preserve the Sobolev spaces (proof of Theorem 1 in \cite{Biard-Straube-2019}). Note that in our case $(n-1-q)=\min\{q_{0},n-1-q\}-1$. The latter is the form level where we have shown estimates. Thus we have 
\begin{multline}\label{5.33}
 \|u\|_{k}^{2}=\|u\wedge dz_{1}\wedge \cdots\wedge dz_{n}\|_{k}^{2}=\|A_{n,q}(u\wedge dz_{1}\wedge \cdots\wedge dz_{n})\|_{k}^{2} \\
 \lesssim \|\overline{\partial}_{b}A_{n,q}(u\wedge dz_{1}\wedge \cdots\wedge dz_{n}\|_{k}^{2}= \|A_{n,q-1}\overline{\partial}_{b}^{*}(u\wedge dz_{1}\wedge \cdots\wedge dz_{n})\|_{k}^{2}\\
 = \|\overline{\partial}_{b}^{*}(u\wedge dz_{1}\wedge \cdots\wedge dz_{n})\|_{k}^{2}
 =\|(\overline{\partial}_{b}^{*}u)\wedge dz_{1}\wedge \cdots\wedge dz_{n}\|_{k}^{2}=\|\overline{\partial}_{b}^{*}u\|_{k}^{2}\;.
\end{multline}
We have used that $u\rightarrow u\wedge dz_{1}\wedge \cdots\wedge dz_{n}$
takes tangential forms to tangential forms and preserves orthogonality to $\ker(\overline{\partial}_{b}^{*})$, that $A_{n,q}$, as an isometry, preserves orthogonality in general, and that $A_{n,q}$ and $A_{n,q-1}$  intertwine $\overline{\partial}_{b}$ and $\overline{\partial}_{b}^{*}$ (so that $A_{nq}(u\wedge dz_{1}\wedge \cdots\wedge dz_{n})\perp \ker(\overline{\partial}_{b})$). Inspection of the arguments in \cite{Biard-Straube-2019} shows that for $\eta$ fixed, there is $\delta_{\eta}$ such that the above estimates can be done uniformly on $b\Omega_{\eta,\delta}$ for $0<\delta<\delta_{\eta}$.

\smallskip

This completes the first part of our argument. We have shown that there is $\eta_{3}<1$ such that for $\eta$ with $\eta_{3}<\eta<1$, there exists $\delta_{\eta}>0$ such that the estimates (1) in Theorem \ref{main-result} hold uniformly on $b\Omega_{\eta,\delta}$ for $\eta_{3}<\eta<1$ and $0<\delta<\delta_{\eta}$.

\smallskip

We now address the estimates (2) in Theorem \ref{main-result}. These will follow from what we have already shown together with the results of \cite{Straube25}. Specifically, part (2) of Theorem 1.1. there says that estimates (2) in Theorem \ref{main-result} hold if and only if estimates (1) hold. This means for the pairs $(\eta,\delta)$ from the previous paragraph, the estimates (2) in Theorem \ref{main-result} hold on $b\Omega_{\eta,\delta}$. We need to know that in addition, they hold with uniform constants. The argument in \cite{Straube25} relies on the weighted theory due to Nicoara (\cite{Nicoara06}) and Harrington-Raich (\cite{Harrington-Raich-2011}), see also \cite{Harrington-Peloso-Raich-2015}. In particular, formula (6) in \cite{Harrington-Peloso-Raich-2015}, with $(q-1)$ replaced by $q$ (compare also formula (9) and estimate (10) in \cite{Straube25}), shows that for $0\leq q\leq (n-2)$, we have
\begin{equation}\label{5.34}
 \|S_{q}u\|_{k}^{2}\lesssim \|F_{t}S_{q,t}F_{t}^{-1}u\|^{2}+\|[\overline{\partial}_{b},F_{t}]S_{q,t}F_{t}^{-1}\|_{k}^{2}\;,
\end{equation}
where $S_{q,t}$ is the weighted Szeg\"{o} projection and $F_{t}$ and $F_{t}^{-1}$ are pseudodifferential operators of order zero (Corollary 4.6 and its proof in \cite{Nicoara06}). Given $k$, there is $t_{k}$ so that for $t>t_{k}$, $S_{q,t}$ is continuous in $W^{k}_{(0,q)}(b\Omega)$ and \eqref{5.34} implies continuity of $S_{q}$ (the commutator is of order zero). When $q=(n-1)$, the substitute for \eqref{5.34} is estimate (11) in \cite{Straube25}:
\begin{equation}\label{5.35}
 \|S_{n-1}u\|_{k}^{2}\lesssim \|S_{n-1,t}u\|_{k}^{2} + \|(\overline{\partial}_{b}^{*}-\overline{\partial}_{b,t}^{*}))(I-S_{n-1,t})u\|_{k}^{2}\;.
\end{equation}
It is also observed there that $(\overline{\partial}_{b}^{*}-\overline{\partial}_{b,t}^{*})=[\overline{\partial}_{b}^{*},F_{t}]F_{t}^{-1}$, and so is of order zero. Uniformity of the constants over $b\Omega_{\eta,\delta}$ follows from how the weighted theory is built. Roughly speaking, the weights are formed by multiplication with $e^{-t|z|^{2}}$ or $e^{t|z|^{2}}$ on the positive and negative microlocalizations, respectively. The operators $F_{t}$ play the role of multiplication by $e^{-t|z|^{2}}$ in Kohn's weighted theory for $\overline{\partial}$. These constructions can be done with uniform estimates over boundaries of approximating subdomains, decreasing $\delta_{\eta}$ as needed.

\smallskip

 We have now shown that there is $\eta_{3}<1$ such that for $\eta$ with $\eta_{3}<\eta<1$, there exists $\delta_{\eta}>0$ such that $(1)$ and $(2)$ in Theorem \ref{main-result} hold uniformly on $b\Omega_{\eta,\delta}$ for $\eta_{3}<\eta<1$ and $0<\delta<\delta_{\eta}$. To conclude the proof of Theorem \ref{main-result}, pick an $\eta$ with $\eta_{3}<\eta<1$. The uniform estimates on $b\Omega_{\eta,\delta}$ for $0<\delta<\delta_{\eta}$ now transfer to $b\Omega$ as in section 4 of \cite{BoasStraube91b}. This concludes the proof of Theorem \ref{main-result}.

\section{Appendix: Kohn--Morrey--H\"{o}rmander on the boundary}\label{appendix}

We will need the following Kohn--Morrey--H\"{o}rmander type estimate for the boundary.
\begin{proposition}\label{proposition-kohn-morrey-hormander}
    Let $\O\subset\C^n$ be a smooth bounded pseudoconvex domain; let $u = \sum_{|J|=q}' u_J\overline{\omega}_J$ $\in \dom(\overline{\partial}_{b})\cap\dom(\overline{\partial}_{b}^{*})\subset L^2_{(0,q)}(b\Omega)$ be supported in a special boundary chart; and let $\varphi\in C^\infty(b\O)$. Then 
    \begin{multline}\label{tkmh}
        \normw{\dbarb u}^2 + \normw{\dbarbstarweighted u}^2  \gtrsim \;\normw{\overline{L}u}^2 + 2\text{Re} \isum_{K}\sum_{j,k}\int_{b\O}  c_{jk}\left( T - \frac{1}{2}\frac{\partial\varphi}{\partial\nu} \right) u_{jK}\overline{u_{kK}}\weight \\
        +  2\isum_{K}\int_{b\O} (\partial\overline{\partial}\varphi)\big(\tilde{L}_{u}^{K}\wedge\overline{\tilde{L}_{u}^{K}}\big)\weight  +  \mathcal{O}(\normw{u}^2),
\end{multline}
where $\normw{\cdot}$ is the weighted norm with respect to the weight function $\weight$, $\normw{\overline{L}u}^2$ is the sum $\sum_{j}\sum_{J}' \normw{\overline{L}_ju_J}^2$, $K$ is a multi-index of length $q-1$, $T = L_n-\overline{L}_n$, $c_{jk}$ are the coefficients of the Levi form in the basis $\{L_1,\dots,L_{n-1}\}$, $\tilde{L}_{u}^{K}$ are the vector fields from section \ref{ICDA}, and the constant appearing in the $\mathcal{O}$ term is independent of $\varphi$.
\end{proposition}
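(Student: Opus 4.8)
The plan is to establish \eqref{tkmh} as the specialization, to the present weighted setting and to the range of terms we actually need, of the general weighted Kohn--Morrey--H\"ormander estimate for the (extrinsic) boundary complex in \cite{Harrington-Raich-2011}; the quickest route is to quote that estimate and simplify, but for transparency I will indicate the direct computation, which shows where every term comes from. I would fix the given special boundary chart, with orthonormal frame $L_1,\dots,L_{n-1}$ for $T^{1,0}(b\O)$, complex normal $L_n$ ($L_n\rho\equiv 1$), and $T=L_n-\overline{L}_n$; one may take coordinates $(x_1,\dots,x_{2n-2},t)$ on $b\O$ with $T=-i\,\partial/\partial t$. Writing $u=\isum_{|J|=q}u_J\overline{\omega}_J$ and computing $\dbarb$ and $\dbarbstarweighted$ coefficientwise, one has $\dbarb u=\sum_j\isum_J(\overline{L}_ju_J)\,\overline{\omega}_j\wedge\overline{\omega}_J+(\text{zeroth order in }u)$ and $\dbarbstarweighted u=\isum_K\big(\sum_j L_j^{\varphi}u_{jK}\big)\overline{\omega}_K+(\text{zeroth order})$, where $L_j^{\varphi}$ is the first--order operator appearing as the weighted formal adjoint of $\overline{L}_j$; since $\dbarbstarweighted=e^{\varphi}\dbarbstar(e^{-\varphi}\,\cdot\,)$, one has $L_j^{\varphi}=L_j-(L_j\varphi)+(\text{smooth},\ \varphi\text{-independent})$.

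Next I would expand $\normw{\dbarb u}^2+\normw{\dbarbstarweighted u}^2$. The diagonal part of the first norm, together with the diagonal part of $\isum_K\normw{\sum_jL_j^{\varphi}u_{jK}}^2$, reassembles --- after integrating each $\normw{L_j^{\varphi}u_{jK}}^2$ by parts on the closed manifold $b\O$ --- into $\normw{\overline{L}u}^2=\sum_j\isum_J\normw{\overline{L}_ju_J}^2$ plus lower--order contributions (the index sets with $j\notin J$ come from $\dbarb u$, those with $j\in J$ from the adjoint). The off--diagonal ($j\neq k$) cross terms of $\dbarb u$ cancel, up to lower order, against the corresponding integrated--by--parts cross terms of $\dbarbstarweighted u$ --- the classical Morrey trick --- leaving $\re\isum_K\sum_{j,k}\big([\overline{L}_k,L_j^{\varphi}]u_{jK},u_{kK}\big)_\varphi$ plus first--order complex--tangential and zeroth--order errors. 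Now $[\overline{L}_k,L_j^{\varphi}]=[\overline{L}_k,L_j]-\overline{L}_k(L_j\varphi)+(\text{zeroth order})$: the $T$--component of $[L_j,\overline{L}_k]$ is the Levi coefficient $c_{jk}$ (its remaining part being a combination of the \emph{good} fields $L_l,\overline{L}_l$, $l\le n-1$), which produces the $c_{jk}Tu_{jK}\overline{u_{kK}}$ contribution; while the genuine second--order part of $\overline{L}_k(L_j\varphi)$ is the complex Hessian of $\varphi$ contracted against the Euclidean coefficient vectors of $u$. Keeping the $\dbarb$--derivatives in Euclidean components throughout (natural, since $\dbarb$ is extrinsically defined) except where the Levi form forces the special frame, this last contribution is exactly $\isum_K\sum_{j,k}\int_{b\O}(\partial^2\varphi/\partial z_j\partial\overline{z}_k)\widetilde{u_{jK}}\overline{\widetilde{u_{kK}}}\weight$; a residual $c_{jk}(T\varphi)u_{jK}\overline{u_{kK}}$ piece is purely imaginary after contraction with the Hermitian $c_{jk}$, hence annihilated by the outer $\re$. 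The remaining $\varphi$--derivative, $\partial\varphi/\partial\nu$, enters through the zeroth--order (frame--structure) part of the \emph{extrinsic} $\dbarbstar$, which couples to the normal direction, so that its weighted version contributes a multiple of $c_{jk}\,\partial\varphi/\partial\nu$; collecting coefficients (as normalized in \cite{Harrington-Raich-2011}) gives the stated $c_{jk}(T-\tfrac{1}{2}\partial\varphi/\partial\nu)$ and the displayed factors.

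To finish I would dispose of the errors. Every first--order error above is of the form $(\text{smooth})\cdot\overline{L}_lu_{jK}$ or $(\text{smooth})\cdot L_lu_{jK}$ with $l\le n-1$; the former are absorbed into $\normw{\overline{L}u}^2$ by an $s.c.-l.c.$ Cauchy--Schwarz, and the latter are first rewritten as $\overline{L}_l$--derivatives using $L_l=\overline{L}_l+(L_l-\overline{L}_l)$ and the usual integration--by--parts identity for $\normw{L_lu_{jK}}^2-\normw{\overline{L}_lu_{jK}}^2$, which only reproduces multiples of terms already present plus $\bigo{\normw{u}^2}$. Assembling everything yields \eqref{tkmh}.

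The hard part will be the bookkeeping of the lower--order terms. One must verify that, after completing the relevant squares, no first derivative of $\varphi$ survives at the order of the displayed terms; that the only residual $\varphi$--dependence outside the three displayed terms is through the density $\weight$ in the measure, so that the constant in $\bigo{\normw{u}^2}$ is genuinely independent of $\varphi$ (this rests on the cancellation of the $|\nabla\varphi|^2|u|^2$ terms, exactly as in the classical Kohn--Morrey--H\"ormander identity); and that the piece coupling the weight to the normal really organizes into $-\tfrac{1}{2}c_{jk}\,\partial\varphi/\partial\nu$, which uses that the bad direction enters only through $T=L_n-\overline{L}_n$ whereas it is the real counterpart $L_n+\overline{L}_n\sim\partial/\partial\nu$ that pairs with $\varphi$ in the extrinsic weighted adjoint. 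I would note that pseudoconvexity plays no role in \eqref{tkmh} itself --- only that the Levi matrix $c_{jk}$ is Hermitian is used --- and that it enters only in the applications, where $c_{jk}\ge 0$ lets one discard the Levi term.
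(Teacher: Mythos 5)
Your proposal is correct and correctly identifies the two possible routes. The paper takes the one you call ``quickest'': it cites the identity labelled (HR) on page 150 of \cite{Harrington-Raich-2011}, makes the substitutions $\lambda^{+}=\varphi$, $m=0$ (this is where pseudoconvexity is actually invoked, to justify $m=0$ in the Harrington--Raich framework, which in turn makes $h^{+}_{jk}=c_{jk}$ and $\Gamma^{\lambda^+}_{jk}=\Theta^{\lambda^+}_{jk}$), translates their $\Theta^{\lambda^+}$ into the Euclidean complex Hessian of $\varphi$, and then estimates the single displayed error bound $|E(u)|\lesssim\|u\|_\varphi^2+|(\overline{L}u,u)_\varphi|$ by an $s.c.$--$l.c.$ Cauchy--Schwarz before multiplying through by $2$. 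Your direct-computation sketch goes further and re-derives the mechanism behind (HR): expanding $\normw{\dbarb u}^2+\normw{\dbarbstarweighted u}^2$, performing the Morrey cross-term cancellation, reading off $c_{jk}T$ from the bad component of $[\overline{L}_k,L_j]$, the complex Hessian of $\varphi$ from $\overline{L}_k(L_j\varphi)$, and $\partial\varphi/\partial\nu$ from the zeroth-order normal coupling in the extrinsic weighted adjoint; this is consistent with what Harrington--Raich prove, and your observation about the cancellation of $|\nabla\varphi|^2|u|^2$ terms is the correct reason the $\bigo{\normw{u}^2}$ constant is $\varphi$-independent.

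Two small points of calibration rather than errors. First, the $c_{jk}(T\varphi)$ piece you discuss is not present in \eqref{tkmh} itself; as you correctly note it contributes nothing under the outer $\mathrm{Re}$ because $T\varphi$ is purely imaginary while $\sum_{j,k}c_{jk}u_{jK}\overline{u_{kK}}$ is real. (In the paper a visible $Th_\eta$ term does arise, but only in the subsequent application where $u$ is replaced by $e^{-h_\eta/2}u$, and it is there that the identical $\mathrm{Re}$ argument --- the display labelled \eqref{Re0} --- is actually deployed.) Second, your remark that pseudoconvexity is not needed for the identity as such is true of the direct derivation; it is however invoked in the paper's proof because it is built into the Harrington--Raich normalization $m=0$. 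Neither point affects correctness.
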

Estimate \eqref{tkmh} is a special case of an estimate in \cite{Harrington-Raich-2011}. We thank Andy Raich for pointing us to the second formula from the bottom on page 150. Lets call it (HR). In (HR), take $\lambda^{+}=\varphi$, $m=0$ (since $\Omega$ is pseudoconvex), and $\phi=u$. Then the left hand side of (HR) gives the left hand side of \eqref{tkmh}, the first sum in the fist line of (HR) is absent, and the second sum equals the first term on the right hand side of \eqref{tkmh}. The coefficients $h^{+}_{jk}$ are defined in Lemma 4 in \cite{Harrington-Raich-2011} as $h^{+}_{jk}=c_{jk}-\delta_{jk}\frac{1}{q}\sum_{l=1}^{m}c_{ll}=c_{jk}$ (because $m=0$), where $c_{jk}$ are the coefficients of the Levi form; i.e. they are the same as in \eqref{tkmh}. Next, $\Gamma^{\lambda^{+}}_{jk}=\Theta^{\lambda^{+}}_{jk}$ by the formula immediately preceding (HR) (since $m=0$). Definition (1) of $\Theta^{\lambda^{+}}$ and Proposition 3.1 now show that $\sum_{I\in\mathcal{J}_{q-1}}\sum_{j,k=1}^{n-1}\text{Re}(\Gamma^{\lambda^{+}}_{jk}\phi_{jI},\phi_{kI})=\sum^{'}_{K}(\partial\overline{\partial}\varphi)\big(\tilde{L}_{u}^{K}\wedge\overline{\tilde{L}_{u}^{K}}\big)$. Therefore, (HR) translates to
\begin{multline}\label{6.2}
\normw{\dbarb u}^2 + \normw{\dbarbstarweighted u}^2 = \normw{\overline{L}u}^2 + \text{Re} \isum_{K}\sum_{j,k}\int_{b\O}  c_{jk}\left( T - \frac{1}{2}\frac{\partial\varphi}{\partial\nu} \right) u_{jK}\overline{u_{kK}}\weight  \\
+ \isum_{K}\int_{b\O}(\partial\overline{\partial}\varphi)\big(\tilde{L}_{u}^{K}\wedge\overline{\tilde{L}_{u}^{K}}\big)\weight + E(u)\;,
\end{multline}
where the error term satisfies the estimate $|E(u)|\leq C\big(\|u\|_{\varphi}^{2} +|(\overline{L}u,u)_{\varphi}|\big)$, where the constant $C$ is independent of $\varphi$ and $u$ (see page 147). Estimating $|(\overline{L}u,u)_{\varphi}|$ as usual by $\frac{1}{2}\|\overline{L}u\|_{\varphi}^{2}+\mathcal{O}(\|u\|_{\varphi}^{2})$ gives
\begin{multline}\label{6.3}
 \normw{\dbarb u}^2 + \normw{\dbarbstarweighted u}^2  \gtrsim \frac{1}{2}\normw{\overline{L}u}^2 + \text{Re} \isum_{K}\sum_{j,k}\int_{b\O}  c_{jk}\left( T - \frac{1}{2}\frac{\partial\varphi}{\partial\nu} \right) u_{jK}\overline{u_{kK}}\weight  \\
+ \isum_{K}\int_{b\O}(\partial\overline{\partial}\varphi)\big(\tilde{L}_{u}^{K}\wedge\overline{\tilde{L}_{u}^{K}}\big)\weight +  \mathcal{O}(\normw{u}^2)\;;
\end{multline}
upon multiplication by $2$, this is \eqref{tkmh}. The various integrations by parts and estimates can be done uniformly over approximating domains.

\vskip .5cm

\providecommand{\bysame}{\leavevmode\hbox to3em{\hrulefill}\thinspace}

\end{document}